\newtheorem{theorem}{Theorem}[section]
\newtheorem{lemma}[theorem]{Lemma}
\newtheorem{proposition}[theorem]{Proposition}
\newtheorem{corollary}[theorem]{Corollary}
\theoremstyle{definition}
\newtheorem{definition}[theorem]{Definition}
\newtheorem*{remark}{Remark}
\newtheorem*{question}{Question}
\DeclareMathOperator{\Ext}{Ext}
\DeclareMathOperator{\Hom}{Hom}
\DeclareMathOperator{\Tor}{Tor}
\DeclareMathOperator{\cok}{cok}
\newcommand{\tensor}{\otimes}
\newcommand{\class}[1]{\mathcal{#1}}   
\newcommand{\Z}{\mathbb{Z}}
\newcommand{\Q}{\mathbb{Q/Z}}
\newcommand{\mathcolon}{\colon\,} 
\newcommand{\ar}{\xrightarrow{}} 
\newcommand{\rightperp}[1]{#1^{\perp}}
\newcommand{\leftperp}[1]{{}^\perp #1}
\begin{document}

\title{Model structures on modules over Ding-Chen rings}

\date{\today}

\author{James Gillespie}
\thanks{The author thanks Nanqing Ding of Nanjing University for his interest and support in writing this paper and for sharing his most recent work.}
\address{505 Ramapo Valley Road \\
         Ramapo College of New Jersey \\
         Mahwah, NJ 07430}
\email[Jim Gillespie]{jgillesp@ramapo.edu}


\begin{abstract}
An $n$-FC ring is a left and right coherent ring whose left and right self FP-injective dimension is $n$. The work of Ding and Chen in~\cite{ding and chen 93} and~\cite{ding and chen 96} shows that these rings possess properties which generalize those of $n$-Gorenstein rings. In this paper we call a (left and right) coherent ring with finite (left and right) self FP-injective dimension a Ding-Chen ring. In case the ring is Noetherian these are exactly the Gorenstein rings. We look at classes of modules we call Ding projective, Ding injective and Ding flat which are meant as analogs to Enochs' Gorenstein projective, Gorenstein injective and Gorenstein flat modules. We develop basic properties of these modules. We then show that each of the standard model structures on Mod-$R$, when $R$ is a Gorenstein ring, generalizes to the Ding-Chen case. We show that when $R$ is a commutative Ding-Chen ring and $G$ is a finite group, the group ring $R[G]$ is a Ding-Chen ring.
\end{abstract}

\maketitle


\section{Introduction}

Non-commutative Gorenstein rings were defined and studied by Y.~Iwanaga in~\cite{iwanaga} and~\cite{iwanaga2}. Later Enochs and Jenda and coauthors defined and studied the so-called Gorenstein injective, Gorenstein projective and Gorenstein flat modules and developed Gorenstein homological algebra. In~\cite{hovey} we saw that this theory can be formalized in the language of model categories. There, Hovey showed that when $R$ is a Gorenstein ring, the category of $R$-modules has two Quillen equivalent model structures, a projective model structure and an injective model structure. Their homotopy categories are what we call the stable module category of the ring $R$. There is also a third Quillen equivalent model structure called the flat model structure, as was shown in~\cite{gillespie-hovey}.

The point of this paper is to describe how the homotopy theory on modules over a Gorenstein ring generalizes to a homotopy theory on modules over a so-called $n$-FC ring, or what we call, a Ding-Chen ring. While an $n$-Gorenstein ring is a Noetherian ring with self injective dimension $n$, an $n$-FC ring is a coherent ring with self FP-injective dimension $n$. These rings were introduced and studied by Ding and Chen in~\cite{ding and chen 93} and~\cite{ding and chen 96} and seen to have many properties similar to $n$-Gorenstein rings. Just as a ring is called Gorenstein when it is $n$-Gorenstein for some natural number $n$, we will call a ring ``Ding-Chen'' when it is $n$-FC for some $n$. (The term FC ring is already taken! In the language of Ding and Chen, these are the 0-FC rings.) By now, the work of Ding and coauthors is sophisticated enough to readily show that each of Hovey's model structures generalizes to modules over a Ding-Chen ring. In fact, Ding and Mao explicitly remark in~\cite{ding and mao 07} that one of these model structures (the one we call the injective model structure) must exist.

We can describe the basic idea rather simply in terms of cotorsion pairs, which are intimately related to model category structures by~\cite{hovey}. When $R$ is a Gorenstein ring, we have the class of ``trivial'' modules $\class{W}$, which are the modules of finite injective dimension. It turns out that a module is trivial iff it has finite projective dimension iff it has finite flat dimension. Hovey's injective model structure relies on the fact that $(\class{W},\class{G}\class{I})$ is a complete cotorsion pair, where $\class{G}\class{I}$ is Enochs' class of Gorenstein injective modules. In this model structure every module is cofibrant and $\class{G}\class{I}$ makes up the class of fibrant modules. On the other hand, Hovey's projective model structure relies on the fact that $(\class{G}\class{P},\class{W})$ is a complete cotorsion pair, where $\class{G}\class{P}$ is Enochs' class of Gorenstein projective modules. In this dual model structure every module is fibrant, while $\class{G}\class{P}$ makes up the class of cofibrant modules. When $R$ is a Ding-Chen ring, the class $\class{W}$ of trivial modules is relaxed to consist of the modules of finite FP-injective dimension. It now turns out that a module is trivial iff it has finite flat dimension. Now the injective model structure relies on the fact that $(\class{W},\class{D}\class{I})$ is a complete cotorsion pair, where $\class{D}\class{I}$ is the class of Ding injective modules (fibrant modules). On the other hand, the projective model structure relies on the fact that $(\class{D}\class{P},\class{W})$ is a complete cotorsion pair, where $\class{D}\class{P}$ is the class of Ding projective modules (cofibrant modules). Furthermore, when $R$ is Noetherian, a Ding-Chen ring is automatically Gorenstein. In this case $\class{D}\class{I} = \class{G}\class{I}$ and $\class{D}\class{P} = \class{G}\class{P}$ and our model structures coincide with Hovey's in~\cite{hovey}.

There is a similar result involving a flat model structure on modules over a Ding-Chen ring. In the process we introduce and look at first properties of Ding injective, Ding projective and Ding flat modules. They are the obvious generalizations of Gorenstien injective, Gorenstein projective and Gorenstein flat modules to the Ding-Chen ring case. However, during the writing of this paper the author was made aware that Ding injective modules have appeared in~\cite{ding and mao 08} under the name \emph{Gorenstein FP-injective modules}, while Ding projective modules have appeared in~\cite{ding and mao and li 09} under the name \emph{strongly Gorenstein flat modules}. The author feels that the ``Ding'' names are quite fitting and illuminate the analogy with the homotopy theory of modules over a Gorenstein ring.

The layout of the paper is as follows: In Section~\ref{sec-preliminaries} we give definitions and review concepts which are basic to the rest of the paper. In Section~\ref{sec-ding modules} we introduce Ding modules and use techniques of Enochs and coauthors to show that Ding modules have properties analogous to Gorenstein modules. Similar results have appeared in~\cite{ding and mao 08} and~\cite{ding and mao and li 09}. In Section~\ref{sec-injective and projective model strucs} we define Ding-Chen rings and show the existence of the injective, projective, and flat model structures. This follows rather easily from the work of Ding and coauthors. In Section~\ref{sec-applications to group rings} we show that when $R$ is a commutative Ding-Chen ring and $G$ is a finite group, the group ring $R[G]$ is a Ding-Chen ring. The model structures on $R[G]$ are relevant for defining Tate cohomology.

\section{Preliminaries}\label{sec-preliminaries}

Throughout we assume all rings have an identity and all modules are unital. Unless stated otherwise, an $R$-module will be understood to be a \emph{right} $R$-module. We denote the injective (resp. projective, resp. flat) dimension of an $R$-module $M$ by id($M$) (resp. pd($M$) resp. fd($M$)).

If $R$ is commutative and Noetherian and of finite injective dimension when viewed as a module over itself, then $R$ is called a Gorenstein ring. When $R$ is non commutative, Iwanaga extended the definition of Gorenstein rings as follows: $R$ is (left and right) Noetherian and both id($_RR$) and id($R_R$) are finite. In this case Iwanaga showed that id($_RR$) = id($R_R$), and if this number is $n$, we say $R$ is $n$-Gorenstein. The book~\cite{enochs-jenda-book} is a standard reference for Gorenstein rings and modules.

In~\cite{ding and chen 93} and especially~\cite{ding and chen 96}, Ding and Chen extended this idea yet further by replacing (left and right) Noetherian with (left and right) coherence and replacing the (left and right) injective dimension of $R$ with the (left and right) FP-injective dimension of $R$. The FP-injective dimension of a module was introduced by Stenstr\"{o}m in~\cite{stenstrom-FP-injective} and will be defined in Section~\ref{sec-ding modules}. The rings introduced by Ding and Chen (called $n$-FC rings, or Ding-Chen rings) will be defined in Section~\ref{sec-injective and projective model strucs}.
Examples of Ding-Chen rings include all Gorenstein rings, and the group rings of Section~\ref{sec-applications to group rings}. Any von-Neumann regular ring is also a Ding-Chen ring. In particular, if $R$ is an infinite product of fields, then $R$ is a Ding-Chen ring. Furthermore it follows from Theorem~7.3.1 of~\cite{glaz} that $R[x_1,x_2,x_3, \cdots , x_n]$ is a commutative Ding-Chen ring. Another example of a Ding-Chen ring is the group ring $R[G]$ where $R$ is an FC-ring and $G$ is a locally finite group. See~\cite{damiano}.

A \emph{cotorsion pair} (of $R$-modules) is a pair of classes of modules
$(\class{A},\class{B})$ such that
$\rightperp{\class{A}} = \class{B}$ and $\class{A} =
\leftperp{\class{B}}$. Here $\rightperp{\class{A}}$ is the class
of modules $M$ such that $\Ext^1_R(A,M) = 0$ for all $A
\in \class{A}$, and similarly $\leftperp{\class{B}}$ is the class
of modules $M$ such that $\Ext^1_R(M,B) = 0$ for all $B
\in \class{B}$. Two simple examples of cotorsion
pairs are $(\class{P},\class{A})$ and
$(\class{A},\class{I})$ where $\class{P}$ is the class of
projectives, $\class{I}$ is the class of injectives and
$\class{A}$ is the class of all $R$-modules. A cotorsion pair $(\class{A},\class{B})$ is said to have \emph{enough projectives} if for
any module $M$ there is a short exact sequence $0
\xrightarrow{} B \xrightarrow{} A \xrightarrow{} M \xrightarrow{}
0$ where $B \in \class{B}$ and $A \in \class{A}$. We say it has
\emph{enough injectives} if it satisfies the dual statement. These
two statements are in fact equivalent for the category of $R$-modules.
We say that the cotorsion pair is
\emph{complete} if it has enough projectives and injectives. Note that the cotorsion pairs $(\class{P},\class{A})$
and $(\class{A},\class{I})$ above are each complete.
The book~\cite{enochs-jenda-book} is also an excellent reference for cotorsion pairs.
The equivalence of the enough injectives and enough projectives, although not difficult,
is proved as Proposition 7.1.7 in ~\cite{enochs-jenda-book}.

The most well-known (nontrivial) example of a cotorsion pair is
$(\class{F},\class{C})$ where $\class{F}$ is the class of flat
modules and $\class{C}$ are the cotorsion modules. A proof that this is a complete cotorsion theory can be found in~\cite{enochs-jenda-book}.

\begin{definition}\label{def-hereditary}
A cotorsion pair $(\class{A},\class{B})$ is
called \emph{hereditary} if one of the following hold:
  \begin{enumerate}
  \item $\class{A}$ is resolving. That is, $\class{A}$ is closed
  under taking kernels of epis.
  \item $\class{B}$ is coresolving. That is, $\class{B}$ is closed
  under taking cokernels of monics.
  \item $\Ext_R^{i}(A,B) = 0$ for any $R$-modules $A \in \class{A}$
  and $B \in \class{B}$ and $i \geq 1$.
  \end{enumerate}
\end{definition}

See \cite{enochs-jenda-book} for a proof that these are equivalent.

Complete hereditary cotorsion pairs are intimately related to abelian model category structures and we refer the reader to Theorem~2.2 of~\cite{hovey} for the precise relationship. We end this section by proving a lemma that will be used frequently in the rest of the paper.

\begin{lemma}\label{lemma-intersecting cotorsion classes}
Let $\class{I}$ be the class of injective modules and $\class{P}$ be the class of projective modules. If $\class{W}$ is a coresolving class of objects containing $\class{I}$ and $(\class{W}, \rightperp{\class{W}})$ is a cotorsion pair, then $\class{W} \cap \rightperp{\class{W}} = \class{I}$. On the other hand, if $\class{W}$ is resolving and contains $\class{P}$ and $(\leftperp{\class{W}}, \class{W})$ is a cotorsion pair then $\leftperp{\class{W}} \cap \class{W} = \class{P}$.
\end{lemma}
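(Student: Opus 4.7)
The plan is to prove both statements by the same pattern: the easy inclusion follows directly from the hypotheses, while the reverse inclusion is obtained by splitting off $M$ as a summand of an injective (resp.\ projective) via a suitable short exact sequence.

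For the first statement, the inclusion $\class{I} \subseteq \class{W} \cap \rightperp{\class{W}}$ is immediate: $\class{I} \subseteq \class{W}$ by hypothesis, and $\class{I} \subseteq \rightperp{\class{W}}$ because $\Ext^1_R(-,E) = 0$ whenever $E$ is injective. For the reverse inclusion, given $M \in \class{W} \cap \rightperp{\class{W}}$, I would embed $M$ into an injective module to obtain a short exact sequence $0 \to M \to E \to N \to 0$. Since $\class{W}$ is coresolving and contains both $M$ and $E$, the cokernel $N$ lies in $\class{W}$. Then $M \in \rightperp{\class{W}}$ forces $\Ext^1_R(N,M) = 0$, so the sequence splits and $M$ is a direct summand of the injective $E$, hence injective.

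The second statement is proved by the dual argument. The inclusion $\class{P} \subseteq \leftperp{\class{W}} \cap \class{W}$ follows from $\class{P} \subseteq \class{W}$ together with $\Ext^1_R(P,-) = 0$ for projective $P$. For the reverse, given $M \in \leftperp{\class{W}} \cap \class{W}$, take a surjection $P \twoheadrightarrow M$ from a projective, producing $0 \to K \to P \to M \to 0$. Since $\class{W}$ is resolving and contains $P$ and $M$, the kernel $K$ belongs to $\class{W}$. Then $M \in \leftperp{\class{W}}$ gives $\Ext^1_R(M,K) = 0$, the sequence splits, and $M$ is a summand of $P$, so $M$ is projective.

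There is no real obstacle here; the argument uses only the orthogonality condition built into the cotorsion pair definition together with the (co)resolving closure of $\class{W}$, plus the standard fact that in the category of $R$-modules every object embeds into an injective and admits a surjection from a projective. Notably, completeness of the cotorsion pair is not required.
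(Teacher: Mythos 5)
Your proof is correct and follows the same argument as the paper: embed $M$ into an injective (resp. cover it by a projective), use the coresolving (resp. resolving) hypothesis to place the cokernel (resp. kernel) in $\class{W}$, and split the sequence via the Ext-vanishing from the cotorsion pair. The paper only writes out the first case and invokes duality; you spell out both, but the content is identical.
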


\begin{proof}
The two statements are dual. We prove the first one. Clearly $\class{I} \subseteq \class{W} \cap \rightperp{\class{W}}$. On the other hand, let $W \in \class{W} \cap \rightperp{\class{W}}$, and write $0 \ar W \ar I \ar C \ar 0$ where $I$ is injective. Since $\class{W}$ contains the injectives and is coresolving, $C$ must also be in $\class{W}$. Now $\Ext^1_R(C,W) = 0$, so the sequence must split. So $W$ is a summand of $I$ proving $W$ is injective.
\end{proof}

\section{Ding modules}\label{sec-ding modules}

In this section we consider modules we call Ding injective, Ding projective and Ding flat modules. Just as the Gorenstein modules play the role of fibrant and cofibrant objects in the model structures on modules over Gorenstein rings, the Ding modules play the role of fibrant and cofibrant objects over Ding-Chen rings. Ding injective and projective modules just appeared in the literature in~\cite{ding and mao 08} and~\cite{ding and mao and li 09} respectively.

\begin{definition}
A right $R$-module $E$ is a called \emph{FP-injective} if $\Ext^1_R(F,E) = 0$ for all finitely presented modules $F$. More generally, the \emph{FP-injective dimension} of a right $R$-module $N$ is defined to be the least integer $n \geq 0$ such that $\Ext^{n+1}_R(F,N) = 0$ for all finitely presented right $R$-modules $F$. The FP-injective dimension of $N$ is denoted FP-id($N$) and equals $\infty$ if no such $n$ above exists.
\end{definition}

 These definitions were introduced in~\cite{stenstrom-FP-injective}. There it is shown (Lemma~3.1) that for a (right) coherent ring, the notion of FP-id($N$) behaves analogously to id($N$). In particular, FP-id($N$) $\leq n$ iff $\Ext^{n+1}_R(F,N) = 0$ for all finitely presented $F$ iff $\Ext^{n+1}_R(R/I,N) = 0$ for all finitely generated (right) ideals of $R$ iff the $n$th cosyzygy of any FP-injective coresolution of $N$ is FP-injective. Note that the notion of FP-injective coincides with the notion of injective when $R$ is Noetherian.

\subsection{Ding injectives}

We now introduce and look at basic properties of Ding injective modules. They also appear in~\cite{ding and mao 08} as ``Gorenstein FP-injective modules''. We refer the reader to~\cite{ding and mao 08} for more results, including results concerning existence of covers and envelopes.

\begin{definition} We call a right $R$-module $N$ \emph{Ding injective} if there exists an exact sequence of injective modules $$\cdots \ar I_1 \ar I_0 \ar I^0 \ar I^1 \ar \cdots$$ with $N = \ker{(I^0 \ar I^1)}$ and which remains exact after applying $\Hom_R(E,-)$ for any FP-injective module $E$. We denote the class of Ding injective modules by $\class{D}\class{I}$.
\end{definition}

\begin{remark}
By definition, Ding injective modules are Gorenstein injective. When $R$ is Noetherian the two notions coincide.
\end{remark}

\begin{lemma}\label{lemma-first Ext lemma}
If $E$ is FP-injective and $N$ is Ding injective then $\Ext^i_R(E,N) = 0$ for each $i \geq 1$.
\end{lemma}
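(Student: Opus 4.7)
The plan is to use the given complete injective resolution of $N$ directly as a tool for computing $\Ext^i_R(E,N)$. The key observation is that the right-hand half $0 \to N \to I^0 \to I^1 \to \cdots$ extracted from the defining sequence is an honest injective coresolution of $N$: exactness of $\cdots \to I_0 \to I^0 \to I^1 \to \cdots$ together with $N = \ker(I^0 \to I^1)$ forces $0 \to N \to I^0 \to I^1 \to \cdots$ to be exact, and each $I^k$ is injective by hypothesis.

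Since the coresolution above is an injective resolution of $N$, one may compute $\Ext^i_R(E,N)$ for $i \geq 1$ as the $i$th cohomology of the cochain complex
\[
0 \ar \Hom_R(E,I^0) \ar \Hom_R(E,I^1) \ar \Hom_R(E,I^2) \ar \cdots.
\]
So the task reduces to showing this complex has vanishing cohomology in positive degrees. This is where the hypothesis that $N$ is Ding injective does its work: by definition, applying $\Hom_R(E,-)$ to the full complete resolution $\cdots \ar I_1 \ar I_0 \ar I^0 \ar I^1 \ar \cdots$ yields an \emph{exact} complex. In particular, the sub-complex $\Hom_R(E,I^0) \ar \Hom_R(E,I^1) \ar \Hom_R(E,I^2) \ar \cdots$ is exact at every spot $\Hom_R(E,I^i)$ with $i \geq 1$, which is exactly the vanishing needed.

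I expect no real obstacle here; the proof is essentially an unpacking of the Ding injective definition together with the standard fact that $\Ext$ can be computed from an injective resolution. The one small bookkeeping point is to note that, although the Ding injective condition gives exactness at $\Hom_R(E,I^0)$ as well, what we actually use in the $\Ext^i$ computation for $i\geq 1$ is only the exactness at positions $i \geq 1$ of the coresolution side; the positions coming from $I_0, I_1, \dots$ play no role in the argument. An alternative (equivalent) write-up would be a dimension-shifting induction: put $N^1 = \mathrm{cok}(N \hookrightarrow I^0)$, observe $N^1$ is again Ding injective via the shifted complete resolution, reduce via the long exact sequence to showing $\Ext^1_R(E,N)=0$, and then deduce the latter from surjectivity of $\Hom_R(E,I^0) \to \Hom_R(E,N^1)$, which itself follows from the Hom-exactness at $\Hom_R(E,I^1)$. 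Either presentation works; I would use the direct cohomology-of-the-resolution version for brevity.
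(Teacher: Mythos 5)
Your proof is correct and is essentially the paper's own argument: the right-hand half of the complete resolution is an injective coresolution of $N$, and the $\Hom_R(E,-)$-exactness required in the definition of Ding injective is precisely the vanishing of the cohomology computing $\Ext^i_R(E,N)$ for $i \geq 1$. The bookkeeping remark about which positions of exactness are actually used is accurate but not needed beyond what you wrote.
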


\begin{proof}
In the definition of Ding injective, $N$ has an injective coresolution $0 \ar N \ar I^0 \ar I^1 \ar \cdots$ which remains exact after applying $\Hom_R(E,-)$ for any FP-injective module $E$. So $\Ext^i_R(E,N) = 0$ for each $i \geq 1$.
\end{proof}

\begin{proposition}\label{prop-FP-injective dimension is 0 or inf}
A Ding injective module is either injective or has FP-injective dimension $\infty$.
\end{proposition}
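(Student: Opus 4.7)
The plan is to prove the contrapositive: if $N$ is a Ding injective module with $n := \text{FP-id}(N) < \infty$, then $N$ is in fact injective. The strategy is to traverse the defining exact sequence of $N$ to produce cosyzygies $N^k$ whose FP-injective dimension strictly decreases, reach an $N^k$ that is both Ding injective and FP-injective, and then cascade splittings back to $N$ using Lemma~\ref{lemma-first Ext lemma}.

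First, break the right half $\cdots \to I^0 \to I^1 \to I^2 \to \cdots$ of the Ding injective resolution of $N$ into short exact sequences $0 \to N^k \to I^k \to N^{k+1} \to 0$, where $N^0 = N$ and $N^k := \ker(I^k \to I^{k+1})$. Each $N^k$ is itself Ding injective, because the original defining sequence $\cdots \to I_1 \to I_0 \to I^0 \to I^1 \to \cdots$ of $N$ serves unchanged as a defining sequence for $N^k$ after relabelling the "middle" as the piece $I^k \to I^{k+1}$. Next, feed $\Ext^\ast_R(F,-)$ through the displayed SES for a finitely presented $F$: since $I^k$ is injective, the long exact sequence yields $\Ext^i_R(F, N^{k+1}) \cong \Ext^{i+1}_R(F, N^k)$ for all $i \geq 1$. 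Iterating this shift $n$ times starting from $\text{FP-id}(N^0) = n$ forces $\text{FP-id}(N^n) = 0$; that is, $N^n$ is FP-injective.

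Now descend. Because $N^n$ is FP-injective and $N^{n-1}$ is Ding injective, Lemma~\ref{lemma-first Ext lemma} gives $\Ext^1_R(N^n, N^{n-1}) = 0$, so the SES $0 \to N^{n-1} \to I^{n-1} \to N^n \to 0$ splits. Hence $N^{n-1}$ is a direct summand of the injective module $I^{n-1}$, and is therefore injective (in particular FP-injective). The same argument then applies to $0 \to N^{n-2} \to I^{n-2} \to N^{n-1} \to 0$, and so on down the tower. After $n$ backward splittings, $N = N^0$ is exhibited as a direct summand of an injective module and is therefore injective.

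The only steps requiring a moment of care are the verification that each cosyzygy $N^k$ inherits the Ding injective property---immediate from the observation that the defining sequence of $N$ already serves as a defining sequence for $N^k$---and the standard dimension-shift isomorphism for $\Ext$ across SES with an injective middle term. I foresee no substantive obstacle; the proof is a short orchestration of Lemma~\ref{lemma-first Ext lemma} together with routine homological bookkeeping.
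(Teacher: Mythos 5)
Your proof is correct and follows essentially the same route as the paper: dimension-shift along the right half of the defining exact sequence to reach an FP-injective cosyzygy $E^n = N^n$, then invoke Lemma~\ref{lemma-first Ext lemma} to split back down to $N$. The only real difference is presentational --- the paper splits the whole $n$-fold extension at once via $\Ext^n_R(E^n,N)=0$, whereas you split one short exact sequence at a time (which requires, and you correctly supply, the observation that each cosyzygy is again Ding injective); the one point both arguments leave implicit is the degenerate case $n=0$, where $N$ itself is FP-injective and one must instead split $0 \ar Z \ar I_0 \ar N \ar 0$ coming from the left half of the defining sequence.
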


\begin{proof}
Suppose $N$ is Ding injective and $\Ext^{n+1}_R(F,N) = 0$ for all finitely presented $F$.
Let $0 \ar N \ar I^0 \ar I^1 \ar \cdots \ar I^{n-1} \ar E^n$ be exact with each $I^i$ injective and $E^n = \cok{(I^{n-2} \ar I^{n-1})}$. It follows from the isomorphism $\Ext^1_R(F,E_n) \cong \Ext^{n+1}_R(F,N)$, that $E^n$ is FP-injective. Now $0 \ar N \ar I^0 \ar I^1 \ar \cdots \ar I^{n-1} \ar E^n \ar 0$ represents an element of $\Ext^n_R(E^n,N)$, but this group equals 0 by Lemma~\ref{lemma-first Ext lemma}. Therefore the sequence is split exact and so $N$ is a direct sum of $I^0$, and so must be injective.
\end{proof}

\begin{corollary}
If the class of Ding injective right $R$-modules is closed under direct sums, then $R$ is right Noetherian.
\end{corollary}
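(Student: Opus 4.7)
The plan is to combine Proposition~\ref{prop-FP-injective dimension is 0 or inf} with the classical Bass--Papp theorem, which states that $R$ is right Noetherian if and only if every direct sum of injective right $R$-modules is injective. So it suffices to show that, under the stated hypothesis, any direct sum $\bigoplus_\alpha I_\alpha$ of injective right $R$-modules is injective.

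First I would observe that every injective module $I$ is trivially Ding injective: the totally acyclic complex $\cdots\ar 0\ar I\xrightarrow{\,\mathrm{id}\,} I\ar 0\ar\cdots$ (with $I$ placed at positions $I_0$ and $I^0$) is exact, consists of injective modules, has $\ker(I^0\ar I^1)=I$, and remains exact after $\Hom_R(E,-)$ for any module $E$ whatsoever. So each $I_\alpha$ lies in $\class{DI}$, and by the closure hypothesis $\bigoplus_\alpha I_\alpha\in\class{DI}$.

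Next I would verify that $\bigoplus_\alpha I_\alpha$ is FP-injective. Given a finitely presented right $R$-module $F$, choose a presentation $0\ar K\ar P\ar F\ar 0$ with $P$ finitely generated projective and $K$ finitely generated. Since $\Hom_R(P,-)$ and $\Hom_R(K,-)$ both commute with direct sums (as $P$ and $K$ are finitely generated), the functor $\Ext^1_R(F,-)$ commutes with direct sums on this class, giving
\[
\Ext^1_R\!\bigl(F,\,\textstyle\bigoplus_\alpha I_\alpha\bigr)\;\cong\;\bigoplus_\alpha\Ext^1_R(F,I_\alpha)\;=\;0.
\]
Hence $\mathrm{FP\text{-}id}\bigl(\bigoplus_\alpha I_\alpha\bigr)=0$, which is certainly finite.

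Finally, apply Proposition~\ref{prop-FP-injective dimension is 0 or inf}: a Ding injective module with finite FP-injective dimension must be injective. Therefore $\bigoplus_\alpha I_\alpha$ is injective, and Bass--Papp gives that $R$ is right Noetherian. There is no real obstacle here; the only step requiring a moment's care is the commutation of $\Ext^1_R(F,-)$ with direct sums for $F$ finitely presented, which rests on the finite generation of $K$ and $P$ rather than on any coherence hypothesis on $R$.
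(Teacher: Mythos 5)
Your proof is correct and follows essentially the same route as the paper: show that a direct sum of injectives is both Ding injective (by the closure hypothesis) and FP-injective, then invoke Proposition~\ref{prop-FP-injective dimension is 0 or inf} to conclude it is injective, whence $R$ is right Noetherian by the Bass--Papp characterization. The only difference is cosmetic --- you verify directly that FP-injectives are closed under direct sums, where the paper cites Corollary~2.4 of Stenstr\"om.
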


\begin{proof}
Suppose the class of Ding injectives is closed under direct sums. Then since the class of FP-injective modules is always closed under direct sums by Corollary~2.4 of~\cite{stenstrom-FP-injective}, it follows that a direct sum of injectives must be both Ding injective and FP-injective. But then it must be injective by Proposition~\ref{prop-FP-injective dimension is 0 or inf}. So $R$ is (right) Noetherian.
\end{proof}

\subsection{Ding projectives}

We now introduce a kind of dual notion, that of Ding projective module. To be more precise, we will see in Section~\ref{sec-injective and projective model strucs} that the duality holds when $R$ is a Ding-Chen ring. Ding projective modules just appeared in~\cite{ding and mao and li 09} as ``strongly Gorenstein flat modules''. We refer the reader to~\cite{ding and mao and li 09} for more results, in particular, for results concerning existence of covers and envelopes.

\begin{definition} We call a right $R$-module $M$ \emph{Ding projective} if there exists an exact sequence of projective modules $$\cdots \ar P_1 \ar P_0 \ar P^0 \ar P^1 \ar \cdots$$ with $M = \ker{(P^0 \ar P^1)}$ and which remains exact after applying $\Hom_R(-,F)$ for any flat module $F$. We denote the class of all Ding projective modules by $\class{D}\class{P}$.
\end{definition}

\begin{remark}
By definition, Ding projective modules are Gorenstein projective. For a general coherent ring $R$, it follows from Proposition~10.2.6 of~\cite{enochs-jenda-book} that a finitely presented module is Ding projective if and only if it is Gorenstein projective. Also, from  Corollary~\ref{cor-characterization of FC projectives} the Ding projectives are exactly the Gorenstein projectives when $R$ is a Gorenstein ring.
\end{remark}

\begin{lemma}\label{lemma-second Ext lemma}
If $F$ is flat and $M$ is Ding projective then $\Ext^i_R(M,F) = 0$ for each $i \geq 1$.
\end{lemma}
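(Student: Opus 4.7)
The proof is dual (in spirit) to Lemma~\ref{lemma-first Ext lemma} and follows essentially from unpacking the definition of Ding projective. By hypothesis $M$ sits inside a complete resolution
$$\cdots \ar P_1 \ar P_0 \ar P^0 \ar P^1 \ar \cdots$$
of projectives, which is exact everywhere, with $M = \ker(P^0 \ar P^1) = \im(P_0 \ar P^0)$, and which moreover remains exact after applying $\Hom_R(-,F)$ for any flat $F$. The plan is to extract from the ``left half'' of this resolution an ordinary projective resolution of $M$ and then use the exactness of $\Hom_R(-,F)$ applied to the full complete resolution to read off the vanishing of positive Ext.

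More precisely, the left half $\cdots \ar P_1 \ar P_0 \ar M \ar 0$ (where $P_0 \twoheadrightarrow M$ is the factorization of $P_0 \ar P^0$ through its image) is a projective resolution of $M$, so one may compute $\Ext^i_R(M,F)$ as the cohomology of the complex
$$0 \ar \Hom_R(P_0,F) \xrightarrow{d^0} \Hom_R(P_1,F) \xrightarrow{d^1} \Hom_R(P_2,F) \ar \cdots$$
For each $i \geq 1$, the position $\Hom_R(P_i,F)$ is an interior position of the complex obtained from the \emph{entire} complete resolution after applying $\Hom_R(-,F)$, and that complex is exact by assumption. This exactness at $\Hom_R(P_i,F)$ says precisely that $\ker d^i = \im d^{i-1}$, hence $\Ext^i_R(M,F) = 0$.

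The only point that deserves a moment's care is the case $i = 1$: here the incoming map in the complete resolution comes from $\Hom_R(P^0,F)$ rather than from $\Hom_R(P_0,F)$, but since the connecting map $P_0 \ar P^0$ factors through $M$, its image in $\Hom_R(P_0,F)$ coincides with $\im d^0$ (equivalently, with the image of $\Hom_R(M,F) \hookrightarrow \Hom_R(P_0,F)$), so the identification $\ker d^1 = \im d^0$ is unaffected. I do not expect any real obstacle in this argument; it is a direct translation of the definition, and no deeper property of flat modules beyond what is packaged into the definition of Ding projective is needed.
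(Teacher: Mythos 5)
Your argument is correct and is exactly the intended one: the paper leaves this lemma without proof because it is the precise dual of Lemma~\ref{lemma-first Ext lemma}, whose proof is the same unwinding of the definition (the left half of the complete resolution is a projective resolution of $M$ that stays exact under $\Hom_R(-,F)$, so the positive $\Ext$ groups vanish). Your worry about $i=1$ is actually unnecessary --- exactness at $\Hom_R(P_1,F)$ in the Hom-ed complete resolution already involves the map $\Hom_R(P_0,F)\to\Hom_R(P_1,F)$, which is literally $d^0$ --- but nothing in your resolution of that worry is wrong.
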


\begin{proposition}\label{prop-flat dimension is either 0 of infty}
A Ding projective module is either projective or has flat dimension $\infty$.
\end{proposition}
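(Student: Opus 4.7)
My strategy is to dualize the proof of Proposition~\ref{prop-FP-injective dimension is 0 or inf}, exchanging injective with projective and FP-injective with flat. Assuming $M$ is Ding projective with $\mathrm{fd}(M) \leq n < \infty$, I would truncate a projective resolution at the $n$-th step to produce an exact sequence
\[
0 \ar K_n \ar P_{n-1} \ar \cdots \ar P_0 \ar M \ar 0
\]
with each $P_i$ projective and $K_n$ the $n$-th syzygy. A standard $\Tor$ dimension-shifting argument then shows $\Tor_i(N,K_n) \cong \Tor_{i+n}(N,M) = 0$ for all $i \geq 1$ and all left $R$-modules $N$, so $K_n$ is flat.

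Next I would break the long exact sequence above into short exact sequences $0 \ar K_{i+1} \ar P_i \ar K_i \ar 0$, with $K_0 := M$. Iterated dimension shifting in the first variable of $\Ext$ (legitimate because each $P_i$ is projective) yields $\Ext^1_R(K_{n-1},K_n) \cong \Ext^n_R(M,K_n)$, and the latter vanishes by Lemma~\ref{lemma-second Ext lemma} since $M$ is Ding projective and $K_n$ is flat. Hence the top piece $0 \ar K_n \ar P_{n-1} \ar K_{n-1} \ar 0$ splits, so $K_{n-1}$ is a summand of $P_{n-1}$ and is therefore projective.

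The hard part is iterating this to reach $M$ itself. I would first observe that shifting the defining Ding projective sequence of $M$ by one position shows each syzygy $K_i$ is again Ding projective. Since $K_{n-1}$ is now known to be projective (hence flat), Lemma~\ref{lemma-second Ext lemma} gives $\Ext^1_R(K_{n-2},K_{n-1}) = 0$, so the next short exact sequence splits and $K_{n-2}$ is projective. Continuing in this way down to $0 \ar K_1 \ar P_0 \ar M \ar 0$, we find that $M$ is a summand of $P_0$ and hence projective. The main obstacle is precisely this iterative bookkeeping: Proposition~\ref{prop-FP-injective dimension is 0 or inf} dispatches the mirror step in a single sentence (``Therefore the sequence is split exact''), but here we need to verify that Ding projectivity passes to syzygies and to run the splittings inductively.
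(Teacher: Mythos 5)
Your argument is essentially the paper's proof with the final step unpacked: the paper simply constructs the finite resolution $0 \ar F_n \ar P_{n-1} \ar \cdots \ar P_0 \ar M \ar 0$ with $F_n$ flat, observes that its Yoneda class lies in $\Ext^n_R(M,F_n)=0$ by Lemma~\ref{lemma-second Ext lemma}, and declares the resolution split; you supply the iterative splitting that this one-liner elides, which is a worthwhile clarification. The only place where you should be careful is the claim that each syzygy $K_i$ is Ding projective ``by shifting the defining sequence'': that shift only exhibits the syzygies of the particular resolution coming from the complete resolution of $M$, not of an arbitrary truncated projective resolution (to transfer the property you would need Schanuel's lemma plus closure of $\class{D}\class{P}$ under summands, which has not been established at this point in the paper). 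The easy fixes are either to take your projective resolution to be the left half of the defining complete resolution of $M$, or --- better --- to drop the syzygy claim entirely and dimension-shift in the first variable at every stage: $\Ext^1_R(K_{i-1},K_i)\cong\Ext^i_R(M,K_i)=0$ by Lemma~\ref{lemma-second Ext lemma}, using only that $M$ itself is Ding projective and that $K_i$ is flat (being projective, or being $F_n$). With that adjustment your induction closes and the proof is correct.
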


\begin{proof}
Suppose $M$ is Ding projective and fd($M$) = $n < \infty$. Then we can construct an exact sequence $0 \ar F_n \ar P_{n-1} \ar \cdots \ar P_1 \ar P_0 \ar M \ar 0$ with each $P_i$ projective and $F_n = \ker{(P_{n-1} \ar P_{n-2})}$ flat. Then this exact sequence is an element of $\Ext^n_R(M,F_n)$ which equals 0 by Lemma~\ref{lemma-second Ext lemma}. Therefore the resolution is split exact and so $M$ must be projective.
\end{proof}

\begin{corollary}
\begin{enumerate}
\item If the Ding projective right $R$-modules are closed under direct limits then $R$ is right perfect. \\

\item If $R$ is left coherent and the Ding projective right $R$-modules are closed under direct products then $R$ is right perfect. \\
\end{enumerate}
\end{corollary}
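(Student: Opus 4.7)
The plan is to combine Proposition~\ref{prop-flat dimension is either 0 of infty} with two classical characterizations of perfect rings (Bass and Chase) to reduce each part to the assertion that certain flat right $R$-modules are in fact projective. In both parts I would first record the observation that every projective $P$ is Ding projective: take the trivially exact complex $\cdots \to 0 \to P \xrightarrow{1_P} P \to 0 \to \cdots$, with $P$ in degrees $0$ and $0$ (so $M = \ker(P \to 0) = P$), which plainly remains exact after applying $\Hom_R(-,F)$ for any $F$.

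For part~(1), the goal is to produce the hypothesis of Bass's Theorem~P, namely that every flat right $R$-module is projective. By Lazard's theorem, any flat right $R$-module is a direct limit of finitely generated free modules. Finitely generated free modules are projective, hence Ding projective, so the assumed closure of $\class{D}\class{P}$ under direct limits forces every flat right $R$-module into $\class{D}\class{P}$. Proposition~\ref{prop-flat dimension is either 0 of infty} then says that such a module, having flat dimension $0$ rather than $\infty$, must be projective. Bass's theorem now gives that $R$ is right perfect.

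For part~(2), I would invoke Chase's theorem, which asserts that (i) a ring is left coherent if and only if arbitrary products of flat right modules are flat, and (ii) $R^I$ is projective for every index set $I$ if and only if $R$ is left coherent and right perfect. Since $R_R$ is projective hence Ding projective, the product-closure hypothesis forces $R^I \in \class{D}\class{P}$ for every set $I$; left coherence simultaneously forces $R^I$ to be flat. Proposition~\ref{prop-flat dimension is either 0 of infty} then upgrades $R^I$ to projective for every $I$, and the second half of Chase's theorem delivers that $R$ is right perfect.

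The only real obstacle is selecting the correct classical input: Bass's Theorem~P characterizing perfect rings by ``flat equals projective'' is the right vehicle for part~(1), while for part~(2) one needs the more refined Chase theorem relating products $R^I$ to simultaneous left coherence and right perfectness. Once these are in hand, the argument is purely formal, with Proposition~\ref{prop-flat dimension is either 0 of infty} doing all the substantive work on the Ding-theoretic side.
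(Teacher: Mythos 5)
Your proof is correct and follows essentially the same route as the paper: both parts reduce to Proposition~\ref{prop-flat dimension is either 0 of infty} via Lazard's theorem and Bass's characterization of right perfect rings for (1), and via Chase's theorems on products of flat and projective modules for (2). The only cosmetic difference is that in (2) you verify projectivity of $R^I$ rather than of an arbitrary product of projectives, which is an equivalent form of Chase's criterion.
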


\begin{proof}
Recall that a ring $R$ is right perfect if and only if every flat right $R$-module is projective. So by Lazard's theorem it follows that $R$ is right perfect if and only if the class of projective right $R$-modules is closed under direct limits. Suppose the class of Ding projective right $R$-modules is closed under direct limits. Then it follows that a direct limit of projectives must be both Ding projective and flat. But then it must be projective by Proposition~\ref{prop-flat dimension is either 0 of infty}. So $R$ is right perfect.

For the second statement use the results of S.U.~Chase: First, a ring $R$ is left coherent if and only if direct products of flat right $R$-modules are flat. Second, a ring $R$ is left coherent and right perfect if and only if direct products of projective right $R$-modules are projective. Then the proof is similar to the last paragraph.
\end{proof}

\subsection{Ding flats}

One could introduce the notion of a ``Ding flat'' module but it turns out that they are nothing more than the Gorenstein flat modules.

\begin{definition} Call a left $R$-module $M$ \emph{Ding flat} if there exists an exact sequence of flat modules $$\cdots \ar F_1 \ar F_0 \ar F^0 \ar F^1 \ar \cdots$$ with $M = \ker{(F^0 \ar F^1)}$ and which remains exact after applying $E \tensor_R -$ for any FP-injective right $R$-module $E$.
\end{definition}

The following result is a restatement of Ding and Mao's Lemma~2.8 of~\cite{ding and mao 08}.

\begin{proposition}
A left $R$-module $M$ is Ding flat if and only if it is Gorenstein flat. In this case $M^+$ is a Ding injective right $R$-module. The converse holds when $R$ is a right coherent ring.
\end{proposition}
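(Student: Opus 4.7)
I would split the proof into three pieces: (i) the equivalence Ding flat $\Leftrightarrow$ Gorenstein flat; (ii) the implication that $M^+$ is Ding injective; and (iii) the converse of (ii) under right coherence. For (i), the forward direction is immediate since every injective module is FP-injective. For the converse, the plan is to start with a Gorenstein flat witnessing complex $\mathbf{F} = \cdots \ar F_1 \ar F_0 \ar F^0 \ar F^1 \ar \cdots$ of flats with $M$ as middle kernel, exact and remaining exact after $I \tensor_R -$ for every injective right $R$-module $I$, and to show that the very same complex remains exact after $E \tensor_R -$ for every FP-injective $E$.

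The main tool is the adjunction $(E \tensor_R F)^+ \cong \Hom_R(E, F^+)$, which translates exactness of $E \tensor_R \mathbf{F}$ into exactness of $\Hom_R(E, \mathbf{F}^+)$. Now $\mathbf{F}^+$ is an exact complex of injective right $R$-modules, since the character module of a flat is injective, and the Gorenstein flat hypothesis translates to $\Hom_R(I, \mathbf{F}^+)$ being exact for every injective $I$. To upgrade ``exact under $\Hom_R(I,-)$ for injective $I$'' to ``exact under $\Hom_R(E,-)$ for FP-injective $E$'', the key observation is that each cycle of $\mathbf{F}^+$ is a character module and hence pure injective. Given a pure embedding of $E$ into an injective $I$, one can then run a dimension-shift on the short exact sequence $0 \ar E \ar I \ar I/E \ar 0$, using pure injectivity of the cycles together with the already established vanishing against injectives, to obtain the required exactness. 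This upgrade is the main technical obstacle, and is essentially the content of Ding and Mao's Lemma~2.8 of~\cite{ding and mao 08}, to which the proof defers.

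Once (i) is established, assertion (ii) is immediate: the same complex $\mathbf{F}^+$ witnesses $M^+$ as Ding injective, being an exact complex of injective right $R$-modules with $M^+$ as middle kernel and remaining exact under $\Hom_R(E,-)$ for every FP-injective $E$. For (iii), right coherence gives the symmetric characterization that $E$ is FP-injective iff $E^+$ is flat. Starting from a Ding injective witnessing complex $\mathbf{G}$ for $M^+$, apply $(-)^+$ and use right coherence to produce an exact complex of flat left $R$-modules with $M^{++}$ as middle kernel, remaining exact against FP-injective tensor; this exhibits $M^{++}$ as Ding flat, hence Gorenstein flat by (i). Finally, descend from $M^{++}$ to $M$ via the canonical pure embedding $M \hookrightarrow M^{++}$, using the closure of Gorenstein flat modules under pure submodules over a right coherent ring.
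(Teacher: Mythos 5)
The paper itself offers no argument for this proposition; it simply cites Ding and Mao's Lemma~2.8 of~\cite{ding and mao 08}, so you are supplying more detail than the source you are being measured against. Your parts (i) and (ii) are correct and are essentially the standard argument: the cycles of $\mathbf{F}^+$ are character modules of the cycles of $\mathbf{F}$, hence pure injective; an FP-injective module is absolutely pure, so its inclusion into an injective hull $I$ is a pure monomorphism; a map from $E$ into a cycle of $\mathbf{F}^+$ therefore extends to $I$ and then lifts along the complex using the vanishing of $\Ext^1_R(I,-)$ on the cycles, which is exactly the Gorenstein flat hypothesis read through the isomorphism $(E\tensor_R F)^+\cong \Hom_R(E,F^+)$. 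This yields the nontrivial direction of (i) and assertion (ii) simultaneously.

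Part (iii), however, has a genuine gap at the words ``remaining exact against FP-injective tensor.'' Exactness of $E\tensor_R \mathbf{G}^+$ amounts to $\Tor_1^R(E,Y^+)=0$ for the cycles $Y$ of $\mathbf{G}$, and the only available input is $\Ext^i_R(E,Y)=0$. The isomorphism $\Tor_i^R(E,Y^+)\cong\Ext^i_R(E,Y)^+$ that would transfer this holds only when $E$ admits a resolution by finitely generated projectives; FP-injective (even injective) modules do not in general, and writing $E$ as a direct limit of finitely presented modules $E_\lambda$ does not help because $\Ext^1_R(E_\lambda,Y)$ need not vanish for finitely presented $E_\lambda$. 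So dualizing the Ding injective complex does not produce the right half of a complete flat resolution. (The left half is unproblematic: $\Tor_i^R(E,M)^+\cong\Ext^i_R(E,M^+)=0$ uses only the always-valid direction of the duality.) The published proofs instead construct the coresolution $0\ar M\ar F^0\ar F^1\ar\cdots$ directly using flat preenvelopes, which exist over a right coherent ring because products of flat left modules are then flat. Your closing appeal to closure of Gorenstein flat modules under pure submodules is also delicate: the usual proof of that closure passes through the very characterization ``$M$ Gorenstein flat iff $M^+$ Gorenstein injective'' that you are in the middle of establishing, so as written the argument risks circularity.
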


We refer the reader to Theorem~5 of~\cite{ding and chen 96} for characterizations of Ding flat (i.e., Gorenstein flat) modules over Ding-Chen rings.

Over a Gorenstein ring $R$, it is known that Gorenstein projective modules are Gorenstein flat. (See Corollary~10.3.10
of~\cite{enochs-jenda-book}.) However it is not known whether or not this holds over more general rings $R$. Affirmative answers have been given for some other types of rings. For example, see Proposition~3.4 of~\cite{henrik holm-Goren dimension}. Of course, one could ask the analogous question: Are Ding projective modules Ding flat? While this is true when $R$ is a Ding-Chen ring, the following shows that it holds in much more generality.

\begin{proposition}\label{prop-ding projective implies ding flat}
Let $R$ be a left coherent ring. Then any Ding projective right $R$-module $M$ is Ding flat.
\end{proposition}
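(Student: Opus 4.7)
The plan is straightforward once we recall that every projective is flat. The totally acyclic complex of projectives $\cdots \ar P_1 \ar P_0 \ar P^0 \ar P^1 \ar \cdots$ witnessing that $M$ is Ding projective is automatically a complex of flat right $R$-modules with $M = \ker(P^0 \ar P^1)$. So, interpreting the definition of Ding flat for right modules in the obvious symmetric manner, it suffices to show that this very complex remains exact after applying $-\tensor_R E$ for every FP-injective left $R$-module $E$.

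The natural way to reduce this to the $\Hom$-exactness condition already in hand is via the character module $(-)^+ = \Hom_\Z(-,\Q)$. The standard Hom-tensor adjunction yields an isomorphism of complexes of abelian groups
\[
(P_\bullet \tensor_R E)^+ \;\cong\; \Hom_R(P_\bullet,E^+).
\]
Since $\Q$ is a faithfully injective cogenerator of abelian groups, $P_\bullet \tensor_R E$ is exact if and only if $\Hom_R(P_\bullet,E^+)$ is exact. The defining property of Ding projectivity furnishes the latter, provided $E^+$ is flat as a right $R$-module.

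The one nontrivial ingredient is therefore the classical fact that when $R$ is left coherent, the character module of an FP-injective left $R$-module is a flat right $R$-module. This is essentially contained in Stenstr\"om's original work on FP-injectives and is the analogue, for finite presentation, of the Lambek duality between flat and injective modules; it is precisely here that the left coherence of $R$ enters. This is the main (and really only) obstacle; everything else is formal. With it in place, the proof collapses into the chain of implications: $E^+$ is flat, so $\Hom_R(P_\bullet,E^+)$ is exact by the Ding projective hypothesis on $M$, so $(P_\bullet \tensor_R E)^+$ is exact, so $P_\bullet \tensor_R E$ is exact. Hence $M$ is Ding flat.
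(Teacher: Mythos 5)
Your proof is correct and follows essentially the same route as the paper: both arguments hinge on the fact that over a left coherent ring the character module $E^+$ of an FP-injective left module $E$ is a flat right module (the paper cites Fieldhouse), then combine the adjunction isomorphism $(P_\bullet \tensor_R E)^+ \cong \Hom_R(P_\bullet, E^+)$ with the faithful injectivity of $\mathbb{Q}/\mathbb{Z}$ and the $\Hom$-exactness built into Ding projectivity. No gaps.
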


\begin{proof}
Let $M$ be a Ding projective right $R$-module. By definition there is an exact sequence of projective right $R$-modules $$\cdots \ar P_1 \ar P_0 \ar P^0 \ar P^1 \ar \cdots$$ with $M = \ker{(P^0 \ar P^1)}$ and which remains exact after applying $\Hom_R(-,F)$ for any flat right $R$-module $F$. Let $E$ be an FP-injective left $R$-module. Since $R$ is left coherent, $E^+$ is a flat right $R$-module by Theorem~2.2 of~\cite{fieldhouse}. So applying $\Hom_R(-,E^+)$ we get the exact sequence $$\cdots \ar \Hom(P^1, E^+) \ar \Hom(P^0, E^+) \ar \Hom(P_0, E^+) \ar \Hom(P_1, E^+) \ar \cdots$$
But this is naturally isomorphic to $$\cdots \ar (P^1 \tensor_R E)^+ \ar (P^0 \tensor_R E)^+ \ar (P_0  \tensor_R E)^+ \ar (P_1 \tensor_R E)^+ \ar \cdots$$ Therefore $\cdots \ar P_1 \tensor_R E \ar P_0 \tensor_R E \ar P^0  \tensor_R E \ar P^1 \tensor_R E \ar \cdots$ is exact, proving $M$ is a Ding flat right $R$-module.
\end{proof}

In considering the above results one is led to the following question which we do not have an answer for.

 \begin{question}
 When $R$ is a Ding-Chen ring, are there Gorenstein injective modules that are not Ding injective?
\end{question}


\section{model structures on modules over a Ding-Chen ring}\label{sec-injective and projective model strucs}

In~\cite{stenstrom-FP-injective}, Stenstr\"{o}m introduced the notion of an FP-injective module and studied FP-injective modules over coherent rings.
FC rings appear to have been introduced by Damiano in~\cite{damiano}. These are the ``coherent version'' of quasi-Frobenius rings where Noetherian is replaced by coherent and self injective is replaced by self FP-injective. Finally, just as Gorenstein rings are a natural generalization of quasi-Frobenius rings, Ding and Chen extended FC rings to $n$-FC rings in~\cite{ding and chen 93} and~\cite{ding and chen 96}. Throughout this section modules are \emph{right} $R$-modules unless stated otherwise.

\begin{definition}
A ring $R$ is called an \emph{$n$-FC ring} if it is both left and right coherent and FP-id($_RR$) = FP-id($R_R$) = $n$. Often we are not interested in the particular $n$, and so we say $R$ is a \emph{Ding-Chen ring} if it is an $n$-FC ring for some $n \geq 0$.
\end{definition}

We note that Corollary~3.18 of~\cite{ding and chen 93} states that if $R$ is both left and right coherent, and FP-id($_RR$) and FP-id($R_R$) are both finite, then FP-id($_RR$) = FP-id($R_R$) . Ding and Chen go on to prove the following fundamental result, which is the key reason we have a homotopy theory on the category $\text{Mod-}R$ when $R$ is a Ding-Chen ring.

 \begin{theorem}\label{theorem-Ding and Chen}
 Let $R$ be an $n$-FC ring and $M$ be a right $R$-module. Then the following are equivalent:
 \begin{enumerate}
 \item $\text{fd}(M)  < \infty$
 \item $\text{fd}(M)  \leq n$
 \item $\text{FP-id}(M) < \infty$
 \item $\text{FP-id}(M) \leq n$
 \end{enumerate}
 \end{theorem}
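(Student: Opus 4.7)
The plan is to prove the cycle $(2) \Rightarrow (1) \Rightarrow (4) \Rightarrow (3) \Rightarrow (2)$. The implications $(2) \Rightarrow (1)$ and $(4) \Rightarrow (3)$ are immediate from the definitions, so the content sits in $(1) \Rightarrow (4)$ and $(3) \Rightarrow (2)$. These are dual statements, linked by character-module duality and by the fact that $R$ is left and right coherent with common self FP-injective dimension $n$.

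For $(1)\Rightarrow(4)$ the key is the base case that every flat right $R$-module has FP-injective dimension at most $n$. Since $\text{FP-id}(R_R) = n$ and, by Corollary~2.4 of~\cite{stenstrom-FP-injective}, arbitrary direct sums of FP-injectives over a coherent ring are FP-injective, every free right $R$-module has FP-id $\leq n$. Over a coherent ring the functor $\Ext^{n+1}_R(F,-)$ commutes with direct limits when $F$ is finitely presented, so the class of modules with FP-id $\leq n$ is closed under direct limits, and Lazard's theorem promotes the bound to every flat module. A short induction on $\text{fd}(M)$ using a sequence $0 \to K \to F_0 \to M \to 0$ with $F_0$ flat and $\text{fd}(K) = \text{fd}(M) - 1$, together with the long exact sequence $\Ext^{*}_R(F,-)$ for finitely presented $F$, then yields $\text{FP-id}(M) \leq n$.

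For $(3) \Rightarrow (2)$ I would mirror this argument on the opposite side via character modules. The base case is that every FP-injective right $R$-module $E$ satisfies $\text{fd}(E) \leq n$. Indeed, by Theorem~2.2 of~\cite{fieldhouse} and right coherence of $R$, the character module $E^{+}$ is a flat left $R$-module; applying the left-sided analogue of the previous paragraph (which uses $\text{FP-id}({}_RR) = n$) gives $\text{FP-id}(E^{+}) \leq n$ as a left module, and the natural isomorphism $\Tor_{n+1}^R(F,E)^{+} \cong \Ext^{n+1}_R(F,E^{+})$ for finitely presented left $F$ forces $\Tor_{n+1}^R(F,E) = 0$. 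Since every left module is a filtered colimit of finitely presented modules, this is enough to conclude $\text{fd}(E) \leq n$. Induction on $\text{FP-id}(M)$ now closes the argument: embedding $M$ into an injective $I$, a diagram chase on the long exact sequence of $\Ext^{*}_R(F,-)$ applied to $0 \to M \to I \to I/M \to 0$ yields $\text{FP-id}(I/M) \leq \text{FP-id}(M) - 1$, so by the inductive hypothesis $\text{fd}(I/M) \leq n$, and the estimate $\text{fd}(M) \leq \max(\text{fd}(I),\text{fd}(I/M) - 1)$ gives $\text{fd}(M) \leq n$.

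The step I expect to cost the most care is the bookkeeping inside these two inductions, since the crude use of long exact sequences naturally produces the bound $n+1$ rather than $n$. Avoiding this drift in the first induction requires noticing that \emph{both} $\Ext^{n+1}_R(F,K)$ and $\Ext^{n+2}_R(F,K)$ vanish at the inductive step (not just one of them); in the second, it requires the sharper estimate $\text{fd}(M)\leq \max(\text{fd}(I),\text{fd}(I/M) - 1)$ rather than the cruder $\text{fd}(M)\leq \text{fd}(I/M) + 1$. Beyond this subtlety, everything reduces to standard long-exact-sequence manipulations and the two named theorems of Stenstr\"om and Fieldhouse cited above.
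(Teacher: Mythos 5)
Your argument is correct, and it is worth noting at the outset that the paper itself does not prove this theorem at all: it is imported verbatim from Ding and Chen (\cite{ding and chen 93}, \cite{ding and chen 96}), so there is no internal proof to compare against. What you have written is a self-contained reconstruction, and it hangs together. The cycle $(2)\Rightarrow(1)\Rightarrow(4)\Rightarrow(3)\Rightarrow(2)$ is the right decomposition; the base case of $(1)\Rightarrow(4)$ correctly exploits the fact that over a right coherent ring a finitely presented $F$ has a resolution by finitely generated projectives, so $\Ext^k_R(F,-)$ commutes with filtered colimits and Lazard's theorem transports $\text{FP-id}(R_R)=n$ to all flat modules; and the base case of $(3)\Rightarrow(2)$ correctly routes through Fieldhouse ($E^+$ flat on the other side), the left-handed version of the first base case (which is available precisely because $R$ is an $n$-FC ring on \emph{both} sides), and the duality $\Tor^R_{n+1}(F,E)^+\cong\Ext^{n+1}_R(F,E^+)$. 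Your bookkeeping estimates $\text{FP-id}(I/M)\le\text{FP-id}(M)-1$ and $\text{fd}(M)\le\max(\text{fd}(I),\text{fd}(I/M)-1)$ are the correct sharp ones, and they do avoid the drift to $n+1$.

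Two small points of precision. First, in the inductive step of $(1)\Rightarrow(4)$ the two terms that must vanish in the long exact sequence $\Ext^{n+1}_R(F,F_0)\to\Ext^{n+1}_R(F,M)\to\Ext^{n+2}_R(F,K)$ are $\Ext^{n+1}_R(F,F_0)$ (from the base case applied to the flat module $F_0$) and $\Ext^{n+2}_R(F,K)$ (from the inductive hypothesis); your parenthetical names both groups with argument $K$, which is true but not quite what the sequence uses. Second, passing from ``$\Ext^{n+1}_R(F,K)=0$ for all finitely presented $F$'' to ``$\Ext^{n+2}_R(F,K)=0$ for all finitely presented $F$'' is exactly Stenstr\"om's Lemma~3.1 and is another place where coherence is silently invoked; it deserves an explicit citation since the paper's stated definition of FP-injective dimension only controls $\Ext^{n+1}$.
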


 Due to Theorem~\ref{theorem-Ding and Chen} we can make the following definition.

 \begin{definition}
 If $R$ is a Ding-Chen ring and $M$ is an $R$-module, we say that $M$ is \emph{trivial} if it satisfies one of the equivalent conditions of Theorem~\ref{theorem-Ding and Chen}. We denote the class of trivial modules by $\class{W}$.
 \end{definition}

Note that if a Ding-Chen ring $R$ happens to be left and right Noetherian, then it is automatically Iwanaga-Gorenstein. In this case $M \in \class{W}$ iff id($M$) is finite iff pd($M$) is finite iff fd($M$) is finite and in this case these dimensions all must be less than or equal to $n$. (See~\cite{iwanaga} or~\cite{enochs-jenda-book} for a proof.) Furthermore, from the Remarks in Section~\ref{sec-ding modules}, the Ding injectives coincide with the Gorenstein injectives and the Ding projectives coincide with the Gorenstein projectives.

\begin{corollary}\label{cor-properties of trivial modules}
Let $R$ be a Ding-Chen ring. Then the class $\class{W}$ of trivial $R$-modules is a thick subcategory. This means $\class{W}$ is closed under retracts and if two out of three terms in a short exact sequence are in $\class{W}$ then so is the third. Furthermore $\class{W}$ is closed under all filtered colimits, in particular transfinite extensions.
\end{corollary}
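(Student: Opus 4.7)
The plan is to use the fourth characterization of $\class{W}$ from Theorem~\ref{theorem-Ding and Chen}, namely $M\in\class{W}$ iff $\text{fd}(M)\leq n$, since flat dimension interacts cleanly with short exact sequences, retracts, and filtered colimits via $\Tor$.

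First I would dispatch the two-out-of-three and retract properties. For any short exact sequence $0\to A\to B\to C\to 0$, the long exact sequence in $\Tor_*^R(-,N)$ gives the standard inequalities
\[
\text{fd}(A)\leq\max\{\text{fd}(B),\text{fd}(C)-1\},\ \text{fd}(B)\leq\max\{\text{fd}(A),\text{fd}(C)\},\ \text{fd}(C)\leq\max\{\text{fd}(A)+1,\text{fd}(B)\}.
\]
Hence if any two of the three have finite flat dimension, so does the third, and then Theorem~\ref{theorem-Ding and Chen} upgrades ``finite'' to ``$\leq n$''. For retracts: if $M$ is a retract of $N$, then any flat resolution of $N$ of length $\leq n$ restricts to one of $M$ of the same length (alternatively, $\Tor^R_i(M,-)$ is a retract of $\Tor^R_i(N,-)$, hence vanishes for $i>n$).

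Next I would handle filtered colimits. Since $\Tor$ commutes with filtered colimits, if $\{M_\alpha\}$ is a filtered diagram with each $M_\alpha\in\class{W}$, then for every left $R$-module $N$ and every $i>n$,
\[
\Tor^R_i\bigl(\colim_\alpha M_\alpha,\,N\bigr)\;\cong\;\colim_\alpha \Tor^R_i(M_\alpha,N)\;=\;0,
\]
so $\text{fd}(\colim_\alpha M_\alpha)\leq n$ and the colimit lies in $\class{W}$.

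Finally, transfinite extensions follow by transfinite induction on the length $\lambda$ of a continuous chain $\{M_\alpha\}_{\alpha<\lambda}$ with $M_0=0$ and successive quotients $M_{\alpha+1}/M_\alpha\in\class{W}$: at successor stages apply two-out-of-three to $0\to M_\alpha\to M_{\alpha+1}\to M_{\alpha+1}/M_\alpha\to 0$, and at limit stages use that $M_\beta=\colim_{\alpha<\beta}M_\alpha$ is a filtered colimit of members of $\class{W}$. I do not anticipate a real obstacle here; the only subtlety is remembering to invoke Theorem~\ref{theorem-Ding and Chen} at each stage to convert ``finite flat dimension'' back to ``$\leq n$'', which is what allows the uniform bound to propagate through transfinite constructions.
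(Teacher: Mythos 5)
Your proof is correct, but it takes a slightly different route from the paper's. The paper's (very terse) argument runs through $\Ext$: it invokes Theorem~\ref{theorem-Ding and Chen} together with ``properties of $\Ext$'' for thickness, and for closure under filtered colimits it cites Hovey's argument, which uses the characterization $\text{FP-id}(M)\leq n$ iff $\Ext^{n+1}_R(F,M)=0$ for all finitely presented $F$, plus the fact that over a coherent ring such $F$ admits a resolution by finitely generated projectives, so $\Ext^{n+1}_R(F,-)$ commutes with filtered colimits. You instead work entirely with the flat-dimension characterization and $\Tor$. The two are parallel for the two-out-of-three and retract claims (your long-exact-sequence inequalities and the retract-of-$\Tor$ argument are standard and correct; note the parenthetical about ``restricting'' a flat resolution to a summand is not literally right, but the $\Tor$ version is). For filtered colimits your route is actually cleaner: $\Tor^R_i(-,N)$ commutes with all filtered colimits with no hypothesis on $R$, whereas the $\Ext$ route needs coherence to make the finitely presented test modules of type $FP_\infty$. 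Your closing point --- that Theorem~\ref{theorem-Ding and Chen} is what converts ``finite'' back to the uniform bound ``$\leq n$'' so the induction can continue transfinitely --- is exactly the crux, and both approaches rely on it in the same way.
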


\begin{proof}
Use Ding and Chen's Theorem~\ref{theorem-Ding and Chen} above along with properties of $\Ext$ to argue that $\class{W}$ is a thick subctegory. The filtered colimits argument is exactly as Hovey proves on page 581 of~\cite{hovey}.
\end{proof}

It is shown in Theorem~3.4 of~\cite{ding and mao 07} that $(\class{W}, \rightperp{\class{W}})$ is a complete cotorsion theory when $R$ is a Ding-Chen ring. It is clear that $\class{W}$ contains $\class{I}$ and is coresolving by Corollary~\ref{cor-properties of trivial modules}. So by Lemma~\ref{lemma-intersecting cotorsion classes} we have $\class{W} \cap \rightperp{\class{W}} = \class{I}$. We see now that $\rightperp{\class{W}} = \class{D}\class{I}$.

We note that in the particular case of when $R$ is an $n$-FC ring, the authors of~\cite{ding and mao 07} call the modules in $\rightperp{\class{W}}$ ``$n$-cotorsion'' modules.

\begin{corollary}\label{cor-characterization of FC injectives}
Let $R$ be a Ding-Chen ring and let $\class{W}$ be the class of trivial modules. Then $N$ is Ding injective if and only if $N \in \rightperp{\class{W}}$.
\end{corollary}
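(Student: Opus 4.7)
The plan is to establish the two inclusions $\class{D}\class{I} \subseteq \rightperp{\class{W}}$ and $\rightperp{\class{W}} \subseteq \class{D}\class{I}$ separately, with the second being the more substantial direction. For the forward direction, suppose $N$ is Ding injective and let $W \in \class{W}$. By Theorem~\ref{theorem-Ding and Chen}, $W$ has finite FP-injective dimension, so there is an exact coresolution $0 \to W \to E^0 \to E^1 \to \cdots \to E^n \to 0$ by FP-injective modules. Breaking this into short exact sequences through its cosyzygies $L^j = \ker(E^j \to E^{j+1})$ and invoking Lemma~\ref{lemma-first Ext lemma} (so that $\Ext^i_R(E^j, N) = 0$ for every $i \geq 1$), a standard dimension-shifting argument working leftward from $L^n = E^n$ shows $\Ext^1_R(W,N) = 0$, so $N \in \rightperp{\class{W}}$.

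For the reverse direction, given $N \in \rightperp{\class{W}}$, the task is to build an exact sequence $\cdots \to I_1 \to I_0 \to I^0 \to I^1 \to \cdots$ of injectives with $N = \ker(I^0 \to I^1)$ that remains exact after $\Hom_R(E,-)$ for every FP-injective $E$. Since $\class{W}$ is thick (Corollary~\ref{cor-properties of trivial modules}), it is closed under kernels of epis, so the cotorsion pair $(\class{W}, \rightperp{\class{W}})$ is hereditary and $\rightperp{\class{W}}$ is coresolving. The right half $0 \to N \to I^0 \to I^1 \to \cdots$ is immediate: iteratively embed into injective envelopes, with coresolvingness keeping every cokernel in $\rightperp{\class{W}}$. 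For the left half, apply ``enough projectives'' of the cotorsion pair to $N$, obtaining $0 \to B \to A \to N \to 0$ with $A \in \class{W}$ and $B \in \rightperp{\class{W}}$. Since $\rightperp{\class{W}}$ is closed under extensions and both $B$ and $N$ lie in it, $A$ does too, whence $A \in \class{W} \cap \rightperp{\class{W}} = \class{I}$. Thus $A$ is injective; set $I_0 = A$ and iterate on $B \in \rightperp{\class{W}}$ to produce the full left resolution, with every kernel again in $\rightperp{\class{W}}$.

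Splicing the two halves yields the desired two-sided complex. For exactness after $\Hom_R(E,-)$, note that every FP-injective $E$ lies in $\class{W}$, and each short exact sequence $0 \to K \to I \to K' \to 0$ appearing in the construction has all three terms in $\rightperp{\class{W}}$; in particular $\Ext^1_R(E,K) = 0$, so $\Hom_R(E,-)$ preserves each such short exact sequence, and gluing them together yields exactness of the full complex. The principal obstacle is the left resolution: recognizing that the completeness of $(\class{W}, \rightperp{\class{W}})$ combined with the intersection identity from Lemma~\ref{lemma-intersecting cotorsion classes} forces the approximating objects to be genuinely injective rather than merely trivial.
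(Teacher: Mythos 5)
Your proposal is correct and follows essentially the same route as the paper: dimension shifting against a finite FP-injective coresolution via Lemma~\ref{lemma-first Ext lemma} for the forward direction, and for the converse, an ordinary injective coresolution on the right spliced with a left resolution built from completeness of $(\class{W},\rightperp{\class{W}})$, using extension-closure of $\rightperp{\class{W}}$ and the identity $\class{W}\cap\rightperp{\class{W}}=\class{I}$ to force the approximating objects to be injective. Your added justification that $\Hom_R(E,-)$ preserves exactness (since every FP-injective $E$ lies in $\class{W}$ and all syzygies lie in $\rightperp{\class{W}}$) is a detail the paper merely asserts.
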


\begin{proof}
Suppose $N$ is Ding injective. Let $W \in \class{W}$. We wish to show $\Ext^1_R(W,N) = 0$. Write a finite FP-injective coresolution $0 \ar W \ar E^0 \ar E^1 \ar \cdots \ar E^n \ar 0$. Then using Lemma~\ref{lemma-first Ext lemma} and dimension shifting we get $\Ext^1_R(W,N) \cong \Ext^{n+1}_R(E^n,N) = 0$.

On the other hand suppose that $N \in \rightperp{\class{W}}$. We wish to show that $N$ is Ding injective. First
take an injective coresolution of $N$ as below.
$$0 \ar N \ar I^0 \ar I^1 \ar I^2 \cdots $$ Note that since $N \in \rightperp{\class{W}}$, and $(\class{W}, \rightperp{\class{W}})$ is a hereditary cotorsion pair, the kernel at any spot in the sequence is also
in $\rightperp{\class{W}}$.
Next we use the fact that $(\class{W},\rightperp{\class{W}})$ is a
complete cotorsion pair to find a short exact sequence $0 \ar K \ar I_0 \ar N \ar 0$
where $I_0 \in \class{W}$ and $K \in \rightperp{\class{W}}$.
But $I_0$ must also be in $\rightperp{\class{W}}$
since it is an extension of two such modules.
As noted before the statement of Corollary~\ref{cor-characterization of FC injectives}, it follows that $I_0$ is an injective module.
Continuing with the same procedure on $K$ we can build an injective resolution of $N$
as below: $$ \cdots \ar I_1 \ar I_0 \ar N \ar 0.$$ Again the kernel at
each spot is in $\rightperp{\class{W}}$. Pasting this ``left'' resolution together
with the ``right'' coresolution above we get an exact
sequence $$\cdots \ar I_1 \ar I_0 \ar I^0 \ar I^1 \ar \cdots $$ of
injective modules with $N = \ker{(I^0 \ar I^1)}$. This sequence satisfies the definition of $N$ being a Ding injective $R$-module since now $\Hom_R(E, -)$ will leave the sequence exact for any FP-injective module $E$.
\end{proof}

We have a dual result as well.
If $R$ is a Ding-Chen ring, then by Theorem~3.8 of~\cite{ding and mao 05}, $(\leftperp{\class{W}}, \class{W})$ is a complete cotorsion pair. It is clear that $\class{W}$ contains $\class{P}$ and is a resolving class. So by Lemma~\ref{lemma-intersecting cotorsion classes} we have $\leftperp{\class{W}} \cap \class{W} = \class{P}$. We see now that $\leftperp{\class{W}} = \class{D}\class{P}$.

We note that in the particular case of when $R$ is $n$-FC, the authors of~\cite{ding and mao 05} call the modules in $\leftperp{\class{W}}$ ``$n$-FP-projective'' modules.

\begin{corollary}\label{cor-characterization of FC projectives}
Let $R$ be a Ding-Chen ring and let $\class{W}$ be the class of trivial modules. Then $M$ is Ding projective if and only if $M \in \leftperp{\class{W}}$.
\end{corollary}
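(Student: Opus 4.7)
The plan is to mimic the argument used to prove Corollary~\ref{cor-characterization of FC injectives}, dualizing every step. The key external input is the dual fact from~\cite{ding and mao 05} that $(\leftperp{\class{W}},\class{W})$ is a complete cotorsion pair (already invoked in the paragraph preceding the statement), together with Lemma~\ref{lemma-intersecting cotorsion classes}, which gives $\leftperp{\class{W}} \cap \class{W} = \class{P}$. Note also that this cotorsion pair is hereditary, since by Corollary~\ref{cor-properties of trivial modules} the class $\class{W}$ is thick and in particular closed under cokernels of monics.

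For the forward direction, suppose $M$ is Ding projective and let $W \in \class{W}$. By Theorem~\ref{theorem-Ding and Chen} we have $\text{fd}(W) \leq n$, so we can choose a finite flat resolution $0 \ar F_n \ar F_{n-1} \ar \cdots \ar F_0 \ar W \ar 0$. Each $F_i$ is flat, so Lemma~\ref{lemma-second Ext lemma} yields $\Ext^{i}_R(M, F_j) = 0$ for all $i \geq 1$ and all $j$. Dimension shifting along this resolution then gives $\Ext^1_R(M,W) \cong \Ext^{n+1}_R(M,F_n) = 0$. Hence $M \in \leftperp{\class{W}}$.

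For the converse, suppose $M \in \leftperp{\class{W}}$. Start with an ordinary projective resolution $\cdots \ar P_1 \ar P_0 \ar M \ar 0$; because $(\leftperp{\class{W}},\class{W})$ is hereditary and $M \in \leftperp{\class{W}}$, each kernel along this resolution again lies in $\leftperp{\class{W}}$. To build a projective coresolution of $M$, use the ``enough injectives'' half of the completeness of the cotorsion pair to obtain a short exact sequence $0 \ar M \ar W^0 \ar L \ar 0$ with $W^0 \in \class{W}$ and $L \in \leftperp{\class{W}}$. Since $M, L \in \leftperp{\class{W}}$ and $\leftperp{\class{W}}$ is closed under extensions, we get $W^0 \in \leftperp{\class{W}} \cap \class{W} = \class{P}$, so $W^0$ is projective. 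Iterating the same construction on $L$ produces $0 \ar M \ar P^0 \ar P^1 \ar \cdots$ with each $P^i$ projective and all cosyzygies in $\leftperp{\class{W}}$. Splice with the projective resolution to form
$$\cdots \ar P_1 \ar P_0 \ar P^0 \ar P^1 \ar \cdots, \qquad M = \ker(P^0 \ar P^1).$$

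It remains to verify that this complex stays exact after applying $\Hom_R(-,F)$ for any flat module $F$. This is the step to be careful about, but it is immediate once we observe that a flat module has flat dimension $0$, so by Theorem~\ref{theorem-Ding and Chen} it is trivial, i.e., $F \in \class{W}$. Since every kernel $K$ in the spliced sequence lies in $\leftperp{\class{W}}$, we have $\Ext^1_R(K,F) = 0$, so each short exact sequence of syzygies remains exact under $\Hom_R(-,F)$. Stitching these together shows the whole complex stays exact, which confirms that $M$ satisfies the definition of a Ding projective module.
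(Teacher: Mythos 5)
Your proposal is correct and follows essentially the same route as the paper: the forward direction via a finite flat resolution, Lemma~\ref{lemma-second Ext lemma}, and dimension shifting; the converse by splicing an ordinary projective resolution with a coresolution built from the completeness of $(\leftperp{\class{W}},\class{W})$, using $\leftperp{\class{W}}\cap\class{W}=\class{P}$ to see the new terms are projective. Your final paragraph checking exactness under $\Hom_R(-,F)$ (since flat modules lie in $\class{W}$ and all syzygies lie in $\leftperp{\class{W}}$) is a detail the paper leaves implicit, and it is exactly the right justification.
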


\begin{proof}
Suppose $M$ is Ding projective. Let $W \in \class{W}$. We wish to show $\Ext^1_R(M,W) = 0$. Write a finite flat resolution $0 \ar F_n \ar \cdots \ar F_1 \ar F_0 \ar W \ar 0$. Then using Lemma~\ref{lemma-second Ext lemma} and dimension shifting we get $\Ext^1_R(M,W) \cong \Ext^{n+1}_R(M,F_n) = 0$.

On the other hand suppose that $M \in \leftperp{\class{W}}$. We wish to show that $M$ is Ding projective. First
take a projective resolution of $M$ as below.
$$\cdots \ar P_2 \ar P_1 \ar P_0 \ar M \ar 0 $$ Note that since $M \in \leftperp{\class{W}}$, and $(\leftperp{\class{W}}, \class{W})$ is a hereditary cotorsion pair, the kernel at any spot in the sequence is also
in $\leftperp{\class{W}}$.
Next we use the fact that $(\leftperp{\class{W}}, \class{W})$ is a
complete cotorsion pair to find a short exact sequence $0 \ar M \ar P^0 \ar C \ar 0$
where $P^0 \in \class{W}$ and $C \in \leftperp{\class{W}}$.
But $P^0$ must also be in $\leftperp{\class{W}}$
since it is an extension of two such modules.
It follows that $P^0$ is a projective module.
Continuing with the same procedure on $C$ we can build a projective coresolution of $M$
as below: $$0 \ar M \ar P^0 \ar P^1 \ar P^2 \ar \cdots.$$ Again the kernel at
each spot is in $\leftperp{\class{W}}$. Pasting this ``right'' coresolution together
with the ``left'' resolution above we get an exact
sequence $$\cdots \ar P_1 \ar P_0 \ar P^0 \ar P^1 \ar \cdots $$ of
projective modules which satisfies the definition of $M$ being a Ding projective $R$-module.
\end{proof}

We now state our results on the existence of three Quillen equivalent homotopy theories on modules over a Ding-Chen ring. 

\begin{theorem}\label{theorem-Ding ring model structures}
Let $R$ be a Ding-Chen ring. Then there are two cofibrantly generated abelian model structures on $\text{Mod-}R$ each having $\class{W}$ as the class of trivial objects. In the first model structure, each module is cofibrant while the fibrant objects (resp. trivially fibrant objects) are the Ding injective modules (resp. injective modules). In the second model structure, each module is fibrant while the cofibrant objects (resp. trivially cofibrant objects) are the Ding projective modules (resp. projective modules). We call the first model structure the \emph{injective model structure} and the second the \emph{projective model structure}. When $R$ is Noetherian these model structures coincide with those in~\cite{hovey}.
\end{theorem}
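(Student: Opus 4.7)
The plan is to invoke Hovey's correspondence (Theorem~2.2 of \cite{hovey}) which says that an abelian model structure on $\text{Mod-}R$ is equivalent to the data of a thick class $\class{W}$ together with two complete cotorsion pairs $(\class{C}\cap\class{W},\class{F})$ and $(\class{C},\class{W}\cap\class{F})$, where $\class{C}$ will be the cofibrant class and $\class{F}$ the fibrant class. For the injective model structure I would take $\class{C}=\text{Mod-}R$, $\class{F}=\class{D}\class{I}$, and for the projective model structure $\class{C}=\class{D}\class{P}$, $\class{F}=\text{Mod-}R$; in both cases $\class{W}$ is the class of trivial modules from Theorem~\ref{theorem-Ding and Chen}. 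The problem then reduces to checking, for each model structure, that the two indicated cotorsion pairs are complete, that the relevant intersections are correct, and that $\class{W}$ is thick.

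\textbf{Assembling the ingredients.} The thickness of $\class{W}$ (and its closure under transfinite extensions) is exactly Corollary~\ref{cor-properties of trivial modules}. For the injective structure the two cotorsion pairs I need are $(\text{Mod-}R,\class{I})$, which is trivially complete, and $(\class{W},\class{D}\class{I})$. The latter is complete by Theorem~3.4 of \cite{ding and mao 07} together with the identification $\rightperp{\class{W}}=\class{D}\class{I}$ furnished by Corollary~\ref{cor-characterization of FC injectives}. The required intersection identity $\class{W}\cap\class{D}\class{I}=\class{I}$ is precisely the first half of Lemma~\ref{lemma-intersecting cotorsion classes}, since $\class{W}$ is coresolving and contains the injectives. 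Dually, for the projective structure I use $(\class{P},\text{Mod-}R)$ together with $(\class{D}\class{P},\class{W})$; completeness of the second pair comes from Theorem~3.8 of \cite{ding and mao 05} combined with Corollary~\ref{cor-characterization of FC projectives}, and $\class{D}\class{P}\cap\class{W}=\class{P}$ is the second half of Lemma~\ref{lemma-intersecting cotorsion classes}.

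\textbf{Cofibrant generation.} To upgrade the two model structures to cofibrantly generated ones in the sense of Hovey, I need to exhibit each cotorsion pair as one cogenerated by a \emph{set}. The trivial pairs $(\text{Mod-}R,\class{I})$ and $(\class{P},\text{Mod-}R)$ are standardly cogenerated. For $(\class{W},\class{D}\class{I})$ I would use that by Theorem~\ref{theorem-Ding and Chen} every object of $\class{W}$ has flat dimension $\leq n$, so a standard Eklof/Trlifaj-type argument (using closure of $\class{W}$ under transfinite extensions from Corollary~\ref{cor-properties of trivial modules}) lets me cogenerate the pair by a set of bounded-cardinality representatives of $\class{W}$. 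The same deconstructibility input drives the cofibrant generation of $(\class{D}\class{P},\class{W})$. This is in fact implicit in the constructions of \cite{ding and mao 07, ding and mao 05}, which proceed via the small object argument.

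\textbf{Comparison with the Gorenstein case.} Finally, when $R$ is Noetherian, it is Iwanaga-Gorenstein; FP-injective coincides with injective, so the Ding injective and Ding projective classes collapse to Enochs' $\class{G}\class{I}$ and $\class{G}\class{P}$, and $\class{W}$ becomes Hovey's class of modules of finite projective/injective/flat dimension. So each of the two model structures literally reproduces the corresponding one of \cite{hovey}. The main obstacle in the whole argument is really the cofibrant generation step, but given the work already in the literature it is essentially routine once the cotorsion pair completeness results have been cited.
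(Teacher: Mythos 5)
Your proposal is correct and follows essentially the same route as the paper: the paper's proof simply records the identities $\class{D}\class{P}\cap\class{W}=\class{P}$ and $\class{D}\class{I}\cap\class{W}=\class{I}$ together with the completeness (and cogeneration by a set) of $(\class{D}\class{P},\class{W})$ and $(\class{W},\class{D}\class{I})$, then cites Theorem~2.2 and Corollary~6.8 of~\cite{hovey}, exactly the ingredients you assemble. Your extra remarks on deconstructibility and on the Noetherian comparison are consistent elaborations of what the paper leaves to the cited references.
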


\begin{proof}
$\class{D}\class{P} \cap \class{W} = \class{P}$ and $\class{D}\class{I} \cap \class{W} = \class{I}$, and $(\class{D}\class{P},\class{W})$ and $(\class{W},\class{D}\class{I})$ are complete hereditary cotorsion pairs (each cogenerated by a set). So the result follows from Theorem~2.2 and Corollary~6.8 of~\cite{hovey}.
\end{proof}

\begin{remark}Note that if $R$ is coherent and has weak dimension $n < \infty$ then $R$ is automatically a Ding-Chen ring. However, in this case $\class{W}$ is the class of all modules. So the model structures in Theorem~\ref{theorem-Ding ring model structures} will be trivial. This has an analog in the Noetherian case: When $R$ is Noetherian with global dimension $n < \infty$ then $R$ is automatically a Gorenstein ring. However, the model structures are trivial. So we are interested in examples of Ding-Chen rings with infinite weak dimension.
\end{remark}

\subsection{The flat model structure}

For any ring $R$, the class $\class{F}$ of flat
$R$-modules form the left side of a complete hereditary cotorsion pair
$(\class{F} ,\class{C})$.  The modules in $\class{C}$ are called cotorsion modules. Similarly, when $R$ is a Ding-Chen ring, the Gorenstein flat
modules form the left side of a complete cotorsion theory, denoted $(\class{G}\class{F},\class{G}\class{C})$. The modules in
$\class{G}\class{C}$ are called Gorenstein cotorsion modules.

\begin{lemma}\label{lemma-ding flats form a cotorsion pair}
When $R$ is a Ding-Chen ring, the Gorenstien flat left $R$-modules form the left side of a complete hereditary cotorsion pair $(\class{G}\class{F},\class{G}\class{C})$.
\end{lemma}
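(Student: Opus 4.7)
The strategy is to combine the $+$-duality between Gorenstein flat left $R$-modules and Ding injective right $R$-modules (the unlabeled proposition in the Ding flats subsection, applicable because a Ding-Chen ring is both left and right coherent) with the complete cotorsion pair $(\class{W}, \class{D}\class{I})$ on the right that was established in Corollary~\ref{cor-characterization of FC injectives}, and then to invoke an Eklof--Trlifaj style small-object argument on the left.

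First I would record the intrinsic characterization that a left $R$-module $M$ lies in $\class{G}\class{F}$ if and only if $\Tor_i^R(W, M) = 0$ for every trivial right module $W \in \class{W}$ and every $i \geq 1$. This follows from the chain $M \in \class{G}\class{F}$ iff $M^+$ is Ding injective iff $M^+ \in \rightperp{\class{W}}$ by Corollary~\ref{cor-characterization of FC injectives}, together with the standard adjunction isomorphism $\Ext_R^i(W, M^+) \cong \Tor_i^R(W, M)^+$ and the hereditary property of $(\class{W}, \rightperp{\class{W}})$ (which in turn lets one upgrade vanishing of $\Ext^1$ to vanishing of all higher $\Ext$).

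From this Tor characterization, $\class{G}\class{F}$ is closed under extensions, retracts, transfinite extensions, and kernels of epimorphisms between its members, all by routine dimension shifting; the last closure already supplies the hereditary property via Definition~\ref{def-hereditary}(1). For completeness I would apply a Kaplansky/Eklof--Trlifaj argument: choose an infinite cardinal $\kappa$ such that every Gorenstein flat left $R$-module admits a continuous filtration by $\kappa$-presented Gorenstein flat submodules with Gorenstein flat consecutive quotients, and let $\class{S}$ be a set of representatives of these small Gorenstein flats. Then Eklof--Trlifaj produces a complete cotorsion pair $(\leftperp{(\rightperp{\class{S}})}, \rightperp{\class{S}})$, and Eklof's lemma (using transfinite-extension closure) identifies its left side with $\class{G}\class{F}$, yielding the desired complete hereditary cotorsion pair $(\class{G}\class{F}, \class{G}\class{C})$ with $\class{G}\class{C} := \rightperp{\class{G}\class{F}}$.

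The main obstacle is the Kaplansky-style deconstruction step: one must produce a uniform cardinal bound so that every Gorenstein flat module is a transfinite extension of small Gorenstein flat submodules, which rests on showing that pure submodules of Gorenstein flat modules are again Gorenstein flat. This is where the Ding-Chen hypothesis does substantive work via Theorem~\ref{theorem-Ding and Chen} (finite self FP-injective dimension controls the relevant Tor vanishing on pure submodules); the remainder of the argument is a formal consequence of the Tor characterization and standard cotorsion-pair machinery.
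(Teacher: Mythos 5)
Your proposal is correct and follows essentially the same route as the paper: a Tor-vanishing characterization of the Gorenstein flat modules over a Ding-Chen ring (you derive it from the $+$-duality with Ding injectives, the paper cites Theorem~5 of~\cite{ding and chen 96}, but these amount to the same thing), closure of $\class{G}\class{F}$ under kernels of epimorphisms and under pure submodules and quotients, and then the Kaplansky/Eklof--Trlifaj deconstruction argument of Proposition~7.4.3 of~\cite{enochs-jenda-book} to get cogeneration by a set and hence completeness. No substantive differences.
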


\begin{proof}
It is easy to show that $\class{G}\class{F}$ is resolving by using Theorem~5 of~\cite{ding and chen 96} and arguing with the functors $\Tor^R_i(E,-)$ where $E$ is an arbitrary FP-injective right $R$-module and $i \geq 1$. A similar $\Tor$ argument will show that if $0 \ar P \ar F \ar F/P \ar 0$ is a pure exact sequence with $F \in \class{G}\class{F}$, then $F/P$, and consequently $P$, are also in $\class{G}\class{F}$. Now an argument just like Proposition~7.4.3 of~\cite{enochs-jenda-book} will show that $(\class{G}\class{F},\class{G}\class{C})$ is cogenerated by a set and hence is complete.
\end{proof}

\begin{theorem}\label{theorem-Gorenstein flat model structure}
If $R$ is a Ding-Chen ring, there is a model structure on $R$-Mod in
which the cofibrant objects are the Gorenstein flat modules, the
fibrant objects are the cotorsion modules, and the trivial
objects are the modules of finite FP-injective dimension.
\end{theorem}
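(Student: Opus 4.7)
The plan is to invoke Hovey's Theorem~2.2 of~\cite{hovey} with the Hovey triple $(\class{G}\class{F}, \class{W}, \class{C})$. The class $\class{W}$ is thick by Corollary~\ref{cor-properties of trivial modules}, and two complete hereditary cotorsion pairs are already available: the classical flat cotorsion pair $(\class{F}, \class{C})$ and the Gorenstein flat pair $(\class{G}\class{F}, \class{G}\class{C})$ from Lemma~\ref{lemma-ding flats form a cotorsion pair}. To identify these with the two Hovey cotorsion pairs I must verify the two intersection identities
\[
\class{G}\class{F} \cap \class{W} = \class{F}, \qquad \class{G}\class{C} = \class{C} \cap \class{W}.
\]

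For the first identity only the inclusion $\subseteq$ needs work, and I would handle it by passing to the Pontryagin dual. Given a Gorenstein flat right $R$-module $M$ with $\text{fd}(M) \leq n$, the (right-module analog of the) Ding--Mao proposition preceding Proposition~\ref{prop-ding projective implies ding flat} applies (since $R$ is two-sided coherent) and shows that $M^+$ is Ding injective as a left $R$-module. Since $\text{id}(M^+) = \text{fd}(M) < \infty$ we also have $M^+ \in \class{W}$ on the left. Lemma~\ref{lemma-intersecting cotorsion classes} applied to $(\class{W}, \class{D}\class{I})$ on the left then forces $M^+ \in \class{D}\class{I} \cap \class{W} = \class{I}$, so $M^+$ is injective and $M$ is flat.

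For the second identity, the containment $\class{G}\class{C} \subseteq \class{C}$ is automatic from $\class{F} \subseteq \class{G}\class{F}$. To obtain $\class{G}\class{C} \subseteq \class{W}$ I take $M \in \class{G}\class{C}$ and form a special $\class{W}$-preenvelope $0 \to M \to W \to D \to 0$ from the complete cotorsion pair $(\class{D}\class{P}, \class{W})$ of Theorem~\ref{theorem-Ding ring model structures}. Since $D$ is Ding projective, Proposition~\ref{prop-ding projective implies ding flat} tells me that $D$ is Gorenstein flat, so $\Ext^1_R(D, M) = 0$ and the sequence splits. Hence $M$ is a retract of $W \in \class{W}$, and by thickness $M \in \class{W}$.

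The reverse inclusion $\class{C} \cap \class{W} \subseteq \class{G}\class{C}$ is the main obstacle. I would iterate Enochs flat covers on $M \in \class{C} \cap \class{W}$ to build a resolution $0 \to M_n \to FC_{n-1} \to \cdots \to FC_0 \to M \to 0$ in which each covering module $FC_k$ is flat cotorsion (its kernel is cotorsion by Enochs, and the cover itself is then cotorsion as an extension of two cotorsion modules); the resolution terminates at step $n$ because $\text{fd}(M) \leq n$ forces the $n$-th kernel $M_n$ to be flat, hence flat cotorsion. Dimension shifting against any $G \in \class{G}\class{F}$ then gives $\Ext^1_R(G, M) \cong \Ext^{n+1}_R(G, M_n)$. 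The remaining step, and the technical heart of the argument, is the auxiliary identity $\class{G}\class{F} \cap \class{G}\class{C} = \class{F} \cap \class{C}$: the inclusion $\subseteq$ follows from splitting the first short exact sequence in a complete flat resolution (using that $\Ext^1$ between any $\class{G}\class{F}$ object and any $\class{G}\class{C}$ object vanishes), while the inclusion $\supseteq$ (that every flat cotorsion module is Gorenstein cotorsion) is the genuinely subtle direction and is proved by exploiting the pure-injectivity of flat cotorsion modules together with the Ding--Chen characterization of Gorenstein flats as modules admitting a complete flat resolution that remains exact against tensoring with FP-injectives. Once flat cotorsion modules are known to lie in $\class{G}\class{C}$, the hereditary property of $(\class{G}\class{F}, \class{G}\class{C})$ kills $\Ext^{n+1}_R(G, M_n)$ and Hovey's theorem produces the flat model structure.
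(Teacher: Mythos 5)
Your verification of $\class{G}\class{F}\cap\class{W}=\class{F}$ and of the containment $\class{G}\class{C}\subseteq\class{C}\cap\class{W}$ is correct and essentially matches the paper (the paper gets $\class{G}\class{C}\subseteq\class{W}$ by simply taking right orthogonals in $\class{D}\class{P}\subseteq\class{G}\class{F}$, i.e.\ $\class{G}\class{C}=\rightperp{\class{G}\class{F}}\subseteq\rightperp{\class{D}\class{P}}=\class{W}$, rather than splitting a preenvelope, but your version is equivalent). The problem is the direction you yourself flag as the main obstacle, $\class{C}\cap\class{W}\subseteq\class{G}\class{C}$. Your entire dimension-shifting scheme reduces to the claim that every flat cotorsion module lies in $\class{G}\class{C}$ (you need it both for the covering terms $FC_k$ and for the final kernel $M_n$), and that claim is not proved: you assert it is handled ``by exploiting the pure-injectivity of flat cotorsion modules together with the Ding--Chen characterization,'' but no argument is given, and the naive long-exact-sequence attempt from $0\ar G\ar F^0\ar G'\ar 0$ pushes the obstruction into $\Ext^2(G',M)$ rather than killing it. So as written the proof has a genuine gap at its load-bearing step.

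The gap is avoidable, and the paper's route is much shorter: given $X\in\class{C}\cap\class{W}$, use completeness of $(\class{G}\class{F},\class{G}\class{C})$ to produce $0\ar X\ar C\ar F\ar 0$ with $C\in\class{G}\class{C}$ and $F\in\class{G}\class{F}$. By the already-established inclusion $\class{G}\class{C}\subseteq\class{W}$ and thickness of $\class{W}$, the cokernel $F$ lies in $\class{G}\class{F}\cap\class{W}=\class{F}$; since $X$ is cotorsion, $\Ext^1_R(F,X)=0$ and the sequence splits, so $X$ is a retract of $C$ and hence Gorenstein cotorsion. In other words, the two identities you proved first already do all the work; no auxiliary identity $\class{G}\class{F}\cap\class{G}\class{C}=\class{F}\cap\class{C}$ and no flat-cover resolution are needed. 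If you do want to complete your route instead, you must actually prove $\Ext^1_R(G,M)=0$ for $G$ Gorenstein flat and $M$ flat cotorsion (e.g.\ via $M$ being a summand of $M^{++}$ and the isomorphism $\Ext^1_R(G,N^+)\cong\Tor_1^R(G,N)^+$ together with $M^+$ FP-injective), which is a nontrivial lemma in its own right.
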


\begin{proof}
We know that $(\class{F},\class{C})$ and $(\class{G}\class{F},\class{G}\class{C})$ are each complete hereditary cotorsion pairs. Again, let $\class{W}$ denote the class of modules with finite FP-injective dimension. Using the results of~\cite{hovey}, the only thing to check is
that $\class{G}\class{F}\cap \class{W} = \class{F}$ and
$\class{C} \cap \class{W} = \class{G}\class{C}$.  It is clear that
$\class{F} \subseteq \class{G}\class{F} \cap \class{W}$.  Recall
that $F$ is a flat left $R$-module if and only if $F^{+}=\Hom_{\Z} (F,\Q)$ is injective
as a right $R$-module.  Thus, if $F \in \class{W}$, $F$ has finite
flat dimension, so $F^{+}$ has finite injective dimension.  Also, by Theorem~5 of~\cite{ding and chen 96}, $F$ is Gorenstein flat if and only if $F^+$ is Gorenstein injective when $R$ is a Ding-Chen ring.
Since we know $\class{G}\class{I} \cap
\class{W}$ is the class of injectives, we
conclude that $F \in \class{G}\class{F} \cap \class{W}$
implies $F^+$ is injective. But this means that $F$ must be flat. So
$\class{F} = \class{G}\class{F} \cap \class{W}$.

Now by Proposition~\ref{prop-ding projective implies ding flat} it follows that $\class{D}\class{P} \subseteq \class{G}\class{F}$, and of course $\class{F} \subseteq \class{G}\class{F}$. So taking the right half of the associated
cotorsion theories gives us $\class{G}\class{C} \subseteq
\class{C} \cap \class{W}$.  Now suppose $X \in \class{C} \cap
\class{W}$. Since $(\class{G}\class{F},\class{G}\class{C})$ is
complete we can find a short exact sequence $0 \ar X \ar C \ar F \ar
0$ where $C \in \class{G}\class{C}$ and $F \in
\class{G}\class{F}$. Since both $X$ and $C$ are in $\class{W}$, so
is $F$.  By the last paragraph, we see that $F \in \class{F}$, which
makes $0 \ar X \ar C \ar F \ar 0$ split and so $X$ is a summand of
$C$. Therefore $X \in \class{G}\class{C}$. So $\class{C} \cap \class{W} = \class{G}\class{C}$
\end{proof}


\section{Applications to group rings}\label{sec-applications to group rings}

As we commented in Section~\ref{sec-injective and projective model strucs} we are interested in examples of Ding-Chen rings with infinite weak dimension. A good example comes from considering group rings. These model structures are relevant for defining Tate cohomology.

\begin{proposition}
Let $R$ be a commutative Ding-Chen ring. Then the group ring $R[G]$ is a Ding-Chen ring for any finite group $G$.
\end{proposition}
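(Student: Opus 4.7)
The plan is to mimic the classical proof that $R[G]$ is Gorenstein when $R$ is Gorenstein and $G$ is finite, systematically replacing injectivity by FP-injectivity throughout. Two things need to be shown: that $R[G]$ is left and right coherent, and that FP-id$(_{R[G]}R[G])$ and FP-id$(R[G]_{R[G]})$ are both finite (and in fact bounded above by FP-id$(R)$).

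For coherence, I would exploit the fact that $R[G]$ is a free $R$-module of finite rank $|G|$, hence finitely presented over $R$. Given a finitely generated right ideal $I \subseteq R[G]$, it is finitely generated over $R$, and as a finitely generated submodule of the finitely presented $R$-module $R[G]$ it is finitely presented over $R$ by coherence of $R$. Choose a surjection $R[G]^n \twoheadrightarrow I$ with kernel $K$; since $R[G]^n$ and $I$ are both finitely presented over $R$, a standard exact-sequence argument forces $K$ to be finitely generated over $R$, and hence over $R[G]$. Thus $I$ is finitely presented over $R[G]$, so $R[G]$ is right coherent, and the left version is symmetric (via the anti-automorphism $g \mapsto g^{-1}$ of $R[G]$ coming from commutativity of $R$).

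For the bound on FP-injective dimension, the main ingredient I would use is the Frobenius isomorphism $R[G] \cong \Hom_R(R[G], R)$ of $R[G]$-bimodules, coming from the ``coefficient of $e$'' map $\epsilon \colon R[G] \to R$. Setting $n = $ FP-id$(_RR)$, I would fix an FP-injective coresolution $0 \to R \to E^0 \to \cdots \to E^n \to 0$ over $R$ and apply the functor $\Hom_R(R[G], -)$, which is exact because $R[G]$ is $R$-free. This produces an exact sequence of length $n$ over $R[G]$ starting at $\Hom_R(R[G], R) \cong R[G]$, so it remains to verify that each $\Hom_R(R[G], E^i)$ is FP-injective over $R[G]$.

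For that last step I would invoke a Shapiro-style change-of-rings formula
\[
\Ext^i_{R[G]}\bigl(F, \Hom_R(R[G], E)\bigr) \;\cong\; \Ext^i_R(F, E),
\]
valid for any finitely presented $R[G]$-module $F$ and any $R$-module $E$; it follows from the restriction-coinduction adjunction together with projectivity of $R[G]$ over $R$ (which makes restriction exact and forces $\Hom_R(R[G], -)$ to preserve injectives). By the same bookkeeping used in the coherence argument, any finitely presented $R[G]$-module $F$ is finitely presented as an $R$-module, so the right-hand side vanishes in degree one when $E$ is FP-injective over $R$. Hence FP-id$_{R[G]}(R[G]) \leq n$, and the right-module analogue is identical. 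The main obstacle I anticipate is really the careful verification of the Frobenius bimodule isomorphism $R[G] \cong \Hom_R(R[G], R)$ and the accompanying change-of-rings Ext formula with respect to left versus right module structures; once these pieces are in place the proof is a direct transplant of the Gorenstein argument.
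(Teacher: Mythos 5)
Your proposal is correct, and its second half — bounding the self-FP-injective dimension — is essentially the paper's argument: both rest on the Frobenius-type isomorphism $R[G] \cong \Hom_R(R[G],R)$ together with the change-of-rings formula $\Ext^n_{R[G]}(M,\Hom_R(R[G],N)) \cong \Ext^n_R(U(M),N)$ and the observation that restriction preserves finite presentation, so that coinduction preserves FP-injectives and hence (using coherence, via Stenstr\"om's Lemma~3.1) modules of finite FP-injective dimension. Where you genuinely diverge is the coherence step. The paper proves coherence of $R[G]$ homologically, via Chase's theorem: $\prod_I R$ is flat over $R$ by coherence of $R$, the functor $-\otimes_R R[G]$ preserves flats (being left adjoint to an exact functor) and, since $G$ is finite, is also a right adjoint and so commutes with products; hence $\prod_I R[G]$ is flat and $R[G]$ is coherent. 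You instead give a direct finite-presentation bookkeeping argument using that $R[G]$ is $R$-free of rank $|G|$: a finitely generated ideal of $R[G]$ is a finitely generated $R$-submodule of a finitely presented $R$-module, hence finitely presented over $R$, and the kernel of a presentation over $R[G]$ is then finitely generated over $R$ and a fortiori over $R[G]$. Both are valid; yours is more elementary and element-free of Chase's characterization, while the paper's is shorter given that machinery and sets up the adjunction formalism it reuses in the dimension argument. Your appeal to the anti-automorphism $g \mapsto g^{-1}$ to transfer left to right statements is also a legitimate shortcut the paper handles instead by running the symmetric argument on the other side.
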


\begin{proof}
First lets recall some things about the group ring $R[G]$. The ring inclusion $R \hookrightarrow R[G]$ induces, by restricting scalars, a functor $U \mathcolon \text{Mod-}R[G] \ar R\text{-Mod}$. The functor $- \tensor_R R[G] \mathcolon R\text{-Mod} \ar \text{Mod-}R[G]$ is left adjoint to $U$. On the other hand, $\Hom_R(R[G],-)$ is right adjoint to $U$. (Here the right $R[G]$-module structure on $\Hom_R(R[G],M)$ is define by $(fr)(s) = f(rs)$.) Since $G$ is finite we have $\Hom_R(R[G],M) \cong M \tensor_R R[G]$ as right $R[G]$-modules. Similarly, we have a forgetful functor $V \mathcolon R[G]\text{-Mod} \ar R\text{-Mod}$ with left adjoint functor $R[G] \tensor_R - \cong \Hom_R(R[G],-)$. (This time the left $R[G]$-module structure on $\Hom_R(R[G],M)$ is define by $(rf)(s) = f(sr)$.)

First we show that $R[G]$ is right coherent. For any indexing set $I$, the product $\prod_{i \in I} R$ is a flat $R$-module by Chase's Theorem (Theorem~4.47 of~\cite{lam}). Since $- \tensor_R R[G]$ is left adjoint to an exact functor it preserves flat modules. Therefore  $(\prod_{i \in I} R) \tensor_R R[G]$ is a flat right $R[G]$-module. But since $G$ is finite, $- \tensor_R R[G]$ is also a right adjoint, so $(\prod_{i \in I} R) \tensor_R R[G] \cong \prod_{i \in I} (R \tensor_R R[G]) \cong \prod_{i \in I} R[G]$ is a flat right $R[G]$-module. It follows, again from Chase's Theorem, that $R[G]$ is left coherent. Arguing on the other side with $R[G] \tensor_R -$ shows $R[G]$ is right coherent as well.

Note that since $G$ is finite, $U$ preserves finitely generated modules. Since $U$ is exact it also preserves finitely presented modules. Since $\Hom_R(R[G],-) \mathcolon R\text{-Mod} \ar \text{Mod-}R[G]$ is also exact, one can prove $$\Ext^n_R(U(M),N) \cong \Ext^n_{R[G]}(M,\Hom_R(R[G],N)).$$ From these two observations, it follows immediately that $\Hom_R(R[G],-)$ preserves FP-injective modules. Now since $R[G]$ is coherent and $\Hom_R(R[G],-)$ preserves FP-injective modules, it follows from Lemma~3.1 of~\cite{stenstrom-FP-injective} that $\Hom_R(R[G],-)$ preserves modules of finite FP-injective dimension. Thus $R[G] \cong \Hom_R(R[G],R)$ has finite right self FP-injective dimension whenever $R$ is a commutative Ding-Chen ring. Arguing on the other side with $V$ and $\Hom_R(R[G],-) \mathcolon R\text{-Mod} \ar R[G]\text{-Mod}$ we see that $R[G]$ had finite left self FP-injective dimension whenever $R$ is a commutative Ding-Chen ring.
\end{proof}

Damiano points out in~\cite{damiano} the following result of Colby. A group ring $R[G]$ is an FC ring if and only if $R$ is an FC ring and $G$ is a locally finite group. This raises the following question.

\begin{question}
Let $R$ be a commutative ring. Is the group ring $R[G]$ a Ding-Chen ring if and only if $R$ is a Ding-Chen ring and $G$ is a locally finite group?
\end{question}


\end{document}